\newtheorem{thm}{Theorem}[section]  
\newtheorem{prop}[thm]{Proposition}
\newtheorem{lemma}[thm]{Lemma}
\newtheorem{dfn}[thm]{Definition}
\newtheorem{rmk}[thm]{Remark}
\newtheorem{ex}[thm]{Example}
\newcommand{\vbgs}[4]{
\xymatrix{#1 \ar[d]_{\tilde p} \ar@<2pt>[r] \ar@<-2pt>[r] & #2 \ar[d]^p \\ #3 \ar@<2pt>[r] \ar@<-2pt>[r] & #4}}
\begin{document}

\newcommand{\vbal}[4]{
\xymatrix{#1 \ar[d]_{\tilde p} \ar[r] & #2 \ar[d]^p \\ #3 \ar[r]  & #4}}
\newcommand\lie{\mathfrak}

\title{\bf Deformations of pairs of codimension one foliations }

\author{ Ameth Ndiaye \thanks{ Address:
    D\'epartement de Mathematiques, FASTEF, Universit\'e Cheikh Anta Diop, Dakar,  S\'en\'egal Email: ameth1.ndiaye@ucad.edu.sn} \
 and   A\"issa Wade \thanks{Address: Department of Mathematics,
 The Pennsylvania State University,
University Park, PA 16802, U.S.A., and University of Waterloo, Ontario, Canada.  \qquad \qquad \qquad 
 Email: wade@math.psu.edu}}

\date{}

\maketitle

\begin{abstract}
The notion  of a linear deformation of  a codimension one foliation into contact structures was introduced in \cite{Dathe-Rukimbira}. This concept  is a special type of deformation of confoliations.
 In this paper, we study  linear deformations of pairs of codimension one foliations into contact pairs. Applications  of our main result are also provided. \end{abstract}
    \noindent {\bf Keywords:} {\small Codimension one foliation, linear deformation, contact pairs.}

\noindent {\bf Mathematics Subject Classification 2010}:57C15, 53C57.


 \section{Introduction}
Confoliations interpolate between  codimension one foliations and contact structures. Their study was initiated by Eliashberg and  Thurston (see \cite{Eliashberg})  who investigated  their deformations. Later on, Etnyre pointed out (see \cite{EP16}) that every contact structure on a closed oriented 3-manifold can be obtained from deformations of foliations. This result was extended to higher dimensions on  some Lie groups by the first author. In this paper, we study   deformations of pairs of confoliations. More precisely, we study the  question:\\

\noindent {\bf Problem:}  Let $M$ be an  even-dimensional manifold. Given a  pair $(\mathcal{F}^1, \mathcal{F}^2)$ of  codimension 1 foliations on $M$  defined by a pair $(\alpha_0, \beta_0)$  of closed non-singular 1-forms,  under which  conditions  the pair $(\mathcal{F}^1, \mathcal{F}^2)$    can be  linearly deformed into a  pair $(\mathcal{F}^1_t, \mathcal{F}^2_t)$  of characteristic foliations of pairs of 1-forms  $(\alpha_t, \beta_t)$ which are transverse and complementary? \\

 We observe that  $(\alpha_t, \beta_t)$ could be  a contact pair in the sense of Bande and Hadar \cite{Bande3}. By a contact pair of type $( k, \ell)$ on $M$, we mean  a pair $(\alpha, \beta)$ of Pfaffian forms  such that
 $\alpha\wedge  d \alpha^k \wedge \beta \wedge d\beta^\ell$ is a volume form on $M$, $d \alpha^{k+1}=0$, and  $d\beta^{\ell +1}=0.$
 In this case,  the characteristic foliations of $\alpha$ and $\beta$  are  transverse and complementary and their leaves  are contact submanifolds  coming from regular Jacobi structures on $M$. Indeed, we know that given a regular Jacobi structure on $M$, there is a regular foliation defined by the distribution of Hamiltonian vector fields whose leaves are either contact submanifolds or locally conformal symplectic submanifolds.   It turns out that  these  two  characteristic foliations are contact foliations of Jacobi structures on $M$.   \\
 
  The paper is divided into  six short sections. In Section 2, we review deformations of confoliations via a linear deformation of their defining 1-form. Section 3 is  devoted to contact fiber bundles in the sense of Lerman \cite{Lerman}.  In Section 4, we recall basic results on contact pairs and Jacobi structures that will be used later.  In Section 5,   we examine  linear  deformations of pairs of foliations. We prove that a necessary and sufficient condition for such a  linear  deformation to exist is that the manifold  admits a contact pair (see Theorem \ref{thm1}).
 As  applications of Theorem \ref{thm1},  we  construct examples on Lie groups and  contact fiber bundles. 

\section{Deformations of confoliations}

All manifolds considered here are  smooth, connected, and orientable. 
Recall that a  \emph{contact form }on a manifold $M$ of dimension $2n+1$ is a 1-form $\alpha$ on $M$ such that $\alpha\wedge(d\alpha)^n\neq 0$ at any point $m \in M$. The condition $\alpha\wedge(d\alpha)^n\neq 0$ everywhere in $M$ means that  the distribution $\xi={\rm ker}(\alpha)$  corresponds to  a  maximally non-integrable  hyperplane field on $M$. 
After fixing an orientation on $M$  we  can  distinguish between the positive and the  negative contact condition, i.e.
$$\star (\alpha\wedge(d\alpha)^n) >0 \quad{\rm or} \quad \star (\alpha\wedge(d\alpha)^n) <0.$$  By a \emph{ positive (resp. negative) confoliation}, we mean that 

$$\star (\alpha\wedge(d\alpha)^n) \geq 0 \quad ({\rm resp.} ~ \star (\alpha\wedge(d\alpha)^n) \leq 0 ).$$

\noindent In their work  on deformations of confoliations \cite{Eliashberg},  Eliashberg and Thurston  proved   that any oriented   $C^2$-foliation  of codimension 1 on an oriented $3$-manifold $M$ can be approximated by contact structures, except when it is the product foliation on  $\mathbb{S}^2\times\mathbb{S}^1$. 
\begin{dfn}
 A foliation $\mathcal{F}$ of codimension 1  on $M$ is called deformable into contact structures if there exist a one parameter family $\mathcal{F}_t$ of hyperplane fields satisfying $\mathcal{F}_0=\mathcal{F}$ and  $\mathcal{F}_t$ is  a contact distribution, for all $t>0.$  
\end{dfn}

Now, suppose  $\mathcal{F}$  is defined by a  closed 1-form  $\alpha_0.$  Following \cite{Dathe-Rukimbira}, we say that a deformation $\mathcal{F}_t$ of  $\mathcal{F}$  is \emph{linear} if there is a 1-parameter family of 1-forms of the form: $\alpha_t=\alpha_0+t\alpha$, where $\alpha$ is a fixed 1-form on $M$ with $\mathcal{F}_t={\rm Ker}(\alpha_t)$. 
The following result was proven  by Dathe and Rukimbira:
\begin{thm} \cite {Dathe-Rukimbira}
Let $M$ be a closed $(2n+1)$-dimensional manifold, let  $\alpha_0$ be  a closed 1-form and $\alpha$ a 1-form on $M$. Then, the following conditions are equivalent.
\begin{itemize}
    \item[(i)] The linear deformation  $\alpha_t=\alpha_0+t\alpha$ of $\alpha_0$ is a  contact  1-form for all $t>0$.
    \item[(ii)] The 1-form $\alpha$ is contact and $\alpha_0(Z)=0,$ where $Z$ is the Reeb vector field of $\alpha$.
\end{itemize}
\end{thm}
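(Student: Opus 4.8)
The plan is to reduce the contact condition for $\alpha_t$ to a pointwise affine inequality in $t$ and then to exploit the closedness of $\alpha_0$ through Stokes' theorem. Since $\alpha_0$ is closed, $d\alpha_t=t\,d\alpha$, so
$$\alpha_t\wedge(d\alpha_t)^n=t^n\bigl(\alpha_0\wedge(d\alpha)^n+t\,\alpha\wedge(d\alpha)^n\bigr).$$
Fixing an orientation-compatible volume form $\mu$ on $M$, I would write $\alpha_0\wedge(d\alpha)^n=f\mu$ and $\alpha\wedge(d\alpha)^n=g\mu$ for smooth functions $f,g$, so that condition (i) becomes: $f+tg$ is nowhere zero for every $t>0$. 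I will also record the algebraic identity relating $f$ and $g$: whenever $\alpha$ is contact, the splitting $TM=\mathbb{R}Z\oplus\ker\alpha$ lets me write $\alpha_0=\alpha_0(Z)\,\alpha+\gamma$ with $\gamma(Z)=0$, and contracting $\gamma\wedge(d\alpha)^n$ with the nowhere-zero Reeb field $Z$ (using $\iota_Z\gamma=0$ and $\iota_Z(d\alpha)^n=0$) shows $\gamma\wedge(d\alpha)^n=0$, since a top-degree form annihilated by a nowhere-zero vector field vanishes. Hence $f=\alpha_0(Z)\,g$.

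For the implication (ii)$\Rightarrow$(i), which is routine: if $\alpha$ is contact then $g\neq0$ everywhere, and the identity $f=\alpha_0(Z)\,g$ together with the hypothesis $\alpha_0(Z)=0$ forces $f\equiv0$. Then $\alpha_t\wedge(d\alpha_t)^n=t^{n+1}g\mu\neq0$ for all $t>0$, so $\alpha_t$ is a contact form.

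The harder direction is (i)$\Rightarrow$(ii), and the main obstacle is showing $\alpha$ is contact, i.e.\ $g\neq0$ everywhere, because the pointwise condition alone does not exclude points where $g=0$ but $f\neq0$. The decisive topological input is that $\alpha_0\wedge(d\alpha)^n$ is exact: using $d\alpha_0=0$ one computes
$$\alpha_0\wedge(d\alpha)^n=-\,d\bigl(\alpha_0\wedge\alpha\wedge(d\alpha)^{n-1}\bigr),$$
so Stokes' theorem on the closed manifold $M$ gives $\int_M f\mu=0$. Now for each fixed $t>0$ the function $f+tg$ is nowhere vanishing and continuous on the connected manifold $M$, hence of constant sign; since $\int_M(f+tg)\mu=t\int_M g\mu$, this common sign cannot depend on $t$. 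Treating the positive case (the negative one being symmetric), I may assume $f+tg>0$ for all $t>0$. Letting $t\to0^+$ yields $f\geq0$ pointwise, and combined with $\int_M f\mu=0$ this forces $f\equiv0$; then $f+tg>0$ gives $g>0$ everywhere, so $\alpha$ is contact.

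Finally, with $\alpha$ now known to be contact, the Reeb field $Z$ is defined and the identity $f=\alpha_0(Z)\,g$ applies; since $f\equiv0$ and $g\neq0$, this yields $\alpha_0(Z)=0$, completing (ii). I expect the constant-sign argument resting on the vanishing of $\int_M f\mu$ to be the crux of the proof; the remaining steps are the standard Reeb identities $\iota_Z\alpha=1$, $\iota_Z d\alpha=0$ and the top-form contraction lemma used to establish $f=\alpha_0(Z)\,g$.
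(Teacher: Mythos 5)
Your proof is correct, and it is worth noting that the paper itself never proves this theorem: it is quoted from Dathe--Rukimbira and used as a black box (e.g.\ in the proof of Proposition 6.1). The right comparison is therefore with the paper's proof of its pair analogue, Theorem~\ref{thm1}, via Propositions~\ref{prop1} and~\ref{inverse}, and there your argument is essentially the same strategy specialized to one form: your expansion $\alpha_t\wedge(d\alpha_t)^n=t^n(f+tg)\mu$ is the single-form version of the polynomial $P(m,t)=t^2A+tB+C$ in (\ref{poly}); your identity $f=\alpha_0(Z)\,g$, obtained by contracting a top form with the Reeb field, is exactly the mechanism of Lemma~\ref{lemma5.3}; and your combination of Stokes' theorem ($\int_M f\mu=0$ from exactness of $\alpha_0\wedge(d\alpha)^n$) with the constant-sign argument on the connected closed manifold reproduces Step 2 of Proposition~\ref{inverse}, where $B\geq 0$ and $\int_M B\Omega=0$ force $B\equiv 0$. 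One genuine difference deserves mention: in the converse direction you need no hypothesis on the Reeb field of $\alpha_t$, since $\alpha_0(Z)=0$ falls out of $f\equiv 0$ and $f=\alpha_0(Z)g$ with $g$ nowhere zero, whereas the paper's Theorem~\ref{thm1} must additionally assume that the Reeb pair of $(\alpha_t,\beta_t)$ scales as $(\frac{1}{t}X,\frac{1}{t}Y)$ to recover all four compatibility conditions; your proof makes visible why that extra assumption is unnecessary in the single-form case. (Both your argument and the paper's implicitly require $n\geq 1$, respectively $k\geq 1$, since the primitive $\alpha_0\wedge\alpha\wedge(d\alpha)^{n-1}$ makes no sense for $n=0$; the theorem indeed fails on $S^1$.)
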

In the special case where $M=G$ is a K-contact Lie group, the first author proved the following result:
\begin{thm}\cite{Ameth} \label{Ameth}
Let $\mathcal{F}$ be a foliation defined by a nowhere vanishing  right invariant 1-form $\alpha^+_0$ in a
K-contact Lie group G of dimension $2n + 1$. If $\alpha^+_0$ is harmonic with respect to some contact metric $h$ then  $\mathcal{F}$ can be  deformed into  contact structures.
\end{thm}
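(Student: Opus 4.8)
The plan is to exhibit an explicit linear deformation and reduce contactness to the Dathe–Rukimbira criterion recalled above. Let $(\eta,\xi,\phi,h)$ denote the right-invariant K-contact structure on $G$, so that $\eta$ is a right-invariant contact form with Reeb field $\xi$, and set $\alpha_t=\alpha^+_0+t\eta$. Since $\eta$ is contact, that criterion tells us that $\alpha_t$ is a contact form for every $t>0$ precisely when $\alpha^+_0(\xi)=0$, and in that case $\mathcal{F}_t=\mathrm{Ker}(\alpha_t)$ is the desired family of contact distributions deforming $\mathcal{F}=\mathcal{F}_0$. Thus the whole theorem reduces to the single identity $\alpha^+_0(\xi)=0$.

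To prove this identity I would use that $\alpha^+_0$ is harmonic for $h$, i.e. $d\alpha^+_0=0$ (which is already built into the fact that $\alpha^+_0$ defines a foliation) and $\delta\alpha^+_0=0$. First I would show that $f:=\alpha^+_0(\xi)$ is constant. Because $G$ is K-contact, $\xi$ is a Killing field, so $\mathcal{L}_\xi$ commutes with $d$, $\delta$ and the Hodge Laplacian; hence $\mathcal{L}_\xi\alpha^+_0$ is again harmonic. On the other hand Cartan's formula together with $d\alpha^+_0=0$ gives $\mathcal{L}_\xi\alpha^+_0=d(\iota_\xi\alpha^+_0)=df$, so $df$ is simultaneously harmonic and exact and therefore vanishes; equivalently, since all the data are right-invariant, $f=\alpha^+_0(\xi)$ is the constant value of a pairing on the Lie algebra $\mathfrak{g}$.

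The core step is to upgrade ``$f$ constant'' to ``$f=0$'', which is the K-contact incarnation of the classical fact that harmonic forms are basic with respect to the Reeb foliation. Here I would use the K-contact identity $\nabla_X\xi=-\phi X$ together with $\delta\alpha^+_0=0$ and the symmetry of $\nabla\alpha^+_0$ coming from $d\alpha^+_0=0$: writing $A=(\alpha^+_0)^\sharp$ one computes $\mathrm{grad}\,f=\nabla_\xi A+\phi A$, so constancy of $f$ forces $\nabla_\xi A=-\phi A$, and feeding this into the Weitzenböck formula $\delta d+d\delta=\nabla^*\nabla+\mathrm{Ric}$ together with $\delta\eta=\mathrm{div}\,\xi=0$ (again from $\xi$ Killing) yields the algebraic relation forcing $f=0$. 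In the right-invariant setting this last computation is purely Lie-theoretic, since $\delta$, $\nabla$ and $\mathrm{Ric}$ all reduce to Koszul expressions on $\mathfrak{g}$ and harmonicity of $\alpha^+_0$ pins down its $\xi$-component. I expect this last step --- proving $f=0$ rather than merely that $f$ is constant --- to be the main obstacle, since it is exactly the place where the full strength of the K-contact hypothesis (and not merely the contact condition) enters; once $\alpha^+_0(\xi)=0$ is established, the Dathe–Rukimbira criterion finishes the proof immediately.
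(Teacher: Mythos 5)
The paper itself gives no proof of this statement --- it is quoted from \cite{Ameth} --- so your attempt can only be judged on its own merits, and on those merits it has a genuine gap, precisely at the step you yourself flag as ``the main obstacle.'' Your reduction is: (a) invoke Dathe--Rukimbira to reduce everything to the identity $\alpha^+_0(\xi)=0$, then (b) prove that identity from harmonicity and the K-contact condition. Step (b) cannot be completed, because the identity is \emph{false} under the theorem's hypotheses. Concretely, let $G=\mathbb{R}\times\mathrm{Aff}(\mathbb{R})_0$, with Lie algebra spanned by $e_1,e_2,e_3$, where $e_1$ is central and $[e_2,e_3]=e_3$. The right-invariant form $\eta=e_1^*+e_3^*$ is contact with Reeb field $\xi=e_1$; since $e_1$ is central, the flow of $\xi$ consists of central (hence two-sided) translations, which are isometries of every right-invariant metric, so any compatible contact metric structure is K-contact. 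Take $\alpha^+_0=e_1^*$: it is closed, nowhere vanishing, and for the compatible contact metric $h$ making $\{e_1,\,e_2,\,e_1-e_3\}$ orthonormal one checks $\delta\alpha^+_0=\mp\,\mathrm{tr}\bigl(\mathrm{ad}_{(\alpha^+_0)^\sharp}\bigr)=0$, because $(\alpha^+_0)^\sharp$ lies in $\mathrm{span}(e_1,e_3)$ and $\mathrm{ad}_{e_1},\mathrm{ad}_{e_3}$ are traceless; so $\alpha^+_0$ is harmonic. Yet $\alpha^+_0(\xi)=1\neq 0$. This does not contradict Dathe--Rukimbira: the necessity direction (the ``precisely when'' you invoke) is proved by Stokes on a \emph{closed} manifold, and this $G$ is not compact --- indeed here $\alpha_t=\alpha^+_0+t\eta$ is contact for all $t>0$ even though $\alpha^+_0(\xi)\neq 0$. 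The same compactness issue already infects your Step 2(a): ``harmonic and exact implies zero'' is Hodge theory on a closed manifold (your right-invariance argument does rescue constancy of $f$, but nothing can rescue $f=0$). Tachibana-type theorems (harmonic $1$-forms annihilate the Reeb field of a K-contact structure) are integration arguments and genuinely need compactness, which is not among the hypotheses.

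The theorem is nonetheless true, and the working argument shows why $\alpha^+_0(\xi)=0$ was the wrong target. Since $\eta$, $\alpha^+_0$ and hence $\xi$ are all right-invariant, $c:=\alpha^+_0(\xi)$ is a \emph{constant}. Contracting the vanishing $(2n+2)$-form $\alpha^+_0\wedge\eta\wedge(d\eta)^n$ with $\xi$ gives the pointwise identity $\alpha^+_0\wedge(d\eta)^n=c\,\eta\wedge(d\eta)^n$, and since $d\alpha^+_0=0$,
\[
\alpha_t\wedge(d\alpha_t)^n \;=\; t^{\,n}\,(c+t)\,\eta\wedge(d\eta)^n .
\]
If $c\geq 0$ this is nonvanishing for every $t>0$, and we are done; if $c<0$, replace $\eta$ by the contact form $-\eta$, whose Reeb field is $-\xi$, and run the same computation with $-c>0$. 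Either way $\mathcal{F}$ admits a linear deformation into contact structures. So the role of invariance is to make $\alpha^+_0(\xi)$ constant and reduce the problem to a choice of sign; insisting on the vanishing of that constant, as your plan does, sets up a statement that the hypotheses simply do not imply.
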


\section{ Contact fiber bundles}

The existence of a contact structure on a given  odd-dimensional manifold has been a fundamental question in  contact geometry which lead to construction methods developed by Geiges \cite{Geiges97}.  This existence question was  recently solved by  
Borman, Eliashberg, and Murphy \cite{BEM15}.  They  proved that contact structures exist on odd-dimensional manifolds whenever almost contact structures exist. 
The following theorem which will be used later in this paper, was proven by  Bourgeois in \cite{Bourgeois}. It provides another construction method for contact structures. 

\begin{thm} \cite{Bourgeois}\label{Theorem Bourgeois}
Let $N^{2n-3}$ be a closed contact manifold of dimension $2n -3$, $n\geq 3$. Then  $M=\mathbb T^2 \times N^{2n-3}$  admits a contact structure that is invariant under the natural $\mathbb T^2$-action, and such that $\lbrace p \rbrace \times N$ is a contact submanifold, for every $p\in \mathbb T^2$.
\end{thm}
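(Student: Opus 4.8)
The plan is to produce an explicit $\mathbb{T}^2$-invariant contact form on $M=\mathbb{T}^2\times N^{2n-3}$ by carrying out Bourgeois's construction. Write $(\theta_1,\theta_2)$ for the angular coordinates on $\mathbb{T}^2$ and let $\alpha$ be a contact form for $\xi$ on $N$, so that $\alpha\wedge(d\alpha)^{n-2}$ is a volume form (recall $\dim N=2n-3=2(n-1)-1$). The key external input I would invoke is Giroux's theorem that every closed contact manifold is supported by an open book decomposition: this yields a binding $B\subset N$, a fibration $\vartheta\colon N\setminus B\to S^1$, and an adapted contact form $\alpha$ for which $d\alpha$ restricts to a symplectic form on each page and $\alpha$ restricts to a contact form on $B$. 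From the open book I would build a smooth map $F=(h_1,h_2)\colon N\to\mathbb{R}^2\cong\mathbb{C}$ with $F^{-1}(0)=B$ and $F/|F|=e^{i\vartheta}$ on $N\setminus B$, and then set
$$\beta \;=\; \alpha + h_1\,d\theta_1 + h_2\,d\theta_2 .$$

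Next I would verify that $\beta$ is a contact form on $M$, i.e.\ that $\beta\wedge(d\beta)^{n-1}$ is a volume form. Since $d\beta=d\alpha+dh_1\wedge d\theta_1+dh_2\wedge d\theta_2$ and the $h_i$ are pulled back from $N$, any volume form on $M$ is a nonzero multiple of $\mu\wedge d\theta_1\wedge d\theta_2$ with $\mu$ a top form on $N$. Expanding $(d\beta)^{n-1}$ by the multinomial formula and discarding every term containing $d\theta_i\wedge d\theta_i=0$ (and noting $(d\alpha)^{n-1}=0$ by dimension on $N$), only the contributions producing the factor $d\theta_1\wedge d\theta_2$ survive after wedging with $\beta$; collecting them gives $\beta\wedge(d\beta)^{n-1}=\Omega\wedge d\theta_1\wedge d\theta_2$, where, up to signs and positive constants,
$$\Omega \;=\; (n-1)(n-2)\,\alpha\wedge(d\alpha)^{n-3}\wedge dh_1\wedge dh_2 \;+\;(n-1)\big(h_2\,(d\alpha)^{n-2}\wedge dh_1+h_1\,(d\alpha)^{n-2}\wedge dh_2\big).$$
Thus the whole problem reduces to showing that the $(2n-3)$-form $\Omega$ is nowhere zero on $N$; note that the hypothesis $n\ge 3$ is exactly what makes $(d\alpha)^{n-3}$ meaningful.

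The main obstacle, and the technical heart of the argument, is precisely the non-vanishing of $\Omega$, and this is where the adaptedness of the open book is essential. I would argue region by region. Near the binding $B$, where $F$ vanishes, the two cross terms $h_i\,(d\alpha)^{n-2}\wedge dh_j$ become negligible, and a local normal-form model of the open book makes the leading term $\alpha\wedge(d\alpha)^{n-3}\wedge dh_1\wedge dh_2$ a strictly positive multiple of the volume $\alpha\wedge(d\alpha)^{n-2}$. Away from $B$ I would use the angular coordinate to write $h_1=r\cos\vartheta$, $h_2=r\sin\vartheta$ with $r=|F|>0$, so that $dh_1\wedge dh_2=r\,dr\wedge d\vartheta$; the adaptedness of $\alpha$ (symplectic on the pages, contact on $B$) then forces each surviving term of $\Omega$ to be a positive multiple of $\alpha\wedge(d\alpha)^{n-2}$, whence $\Omega\neq 0$. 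Combining the two regions shows $\beta\wedge(d\beta)^{n-1}\neq 0$ everywhere on $M$.

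Finally, the invariance and submanifold statements are immediate from the shape of $\beta$. Because $h_1,h_2$ are pulled back from $N$ and $d\theta_1,d\theta_2$ are translation-invariant, $\beta$ is invariant under the natural $\mathbb{T}^2$-action, so $\ker\beta$ is a $\mathbb{T}^2$-invariant contact structure. Moreover, for each $p\in\mathbb{T}^2$ the inclusion $\{p\}\times N\hookrightarrow M$ pulls $d\theta_1,d\theta_2$ back to zero, hence pulls $\beta$ back to $\alpha$; since $\alpha$ is contact on $N$, every slice $\{p\}\times N$ is a contact submanifold, which completes the proof.
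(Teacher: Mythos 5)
The first thing to note is that the paper does not prove this theorem at all: it is quoted from Bourgeois \cite{Bourgeois} and used as a black box in the applications of Section 6, so there is no internal proof to measure your attempt against. The only meaningful comparison is with Bourgeois's original argument, and your outline is essentially that argument: support the contact structure of $N$ by an open book (in dimension $2n-3\geq 3$ this is the Giroux--Mohsen theorem, not just Giroux's three-dimensional result), build $F=(h_1,h_2)$ vanishing precisely on the binding with $F/|F|=e^{i\vartheta}$ off the binding, set $\beta=\alpha+h_1\,d\theta_1+h_2\,d\theta_2$, and verify the contact condition; the $\mathbb T^2$-invariance and the contactness of the slices $\lbrace p\rbrace\times N$ are then immediate, exactly as you say.

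Two things still separate your sketch from a proof. The first is a sign error that matters: expanding $\beta\wedge(d\beta)^{n-1}$ gives, up to an overall sign and a factor $(n-1)$,
\[
(n-2)\,\alpha\wedge(d\alpha)^{n-3}\wedge dh_1\wedge dh_2\;+\;(d\alpha)^{n-2}\wedge\bigl(h_1\,dh_2-h_2\,dh_1\bigr),
\]
and the relative minus sign in the cross term is not negotiable. Off the binding, $h_1\,dh_2-h_2\,dh_1=f(r)^2\,d\vartheta$, which is exactly the form that pairs positively with $(d\alpha)^{n-2}$ because $d\alpha$ is symplectic on the pages; your combination $h_2\,dh_1+h_1\,dh_2=d(h_1h_2)$ is exact, hence changes sign (or vanishes) on the closed manifold $N$, and the positivity argument collapses if taken literally. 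A hedge of the form ``up to signs and positive constants'' cannot absorb a relative sign between two summands that are being added. The second, larger issue is that the positivity of $\Omega$ is asserted region by region but not established: near the binding one needs a normal form (say $\alpha=\alpha_B+r^2\,d\vartheta$ on $B\times D^2$) together with a specific radial profile $h_1=f(r)\cos\vartheta$, $h_2=f(r)\sin\vartheta$, with $f$ chosen (e.g.\ $f(r)=r$ near $0$, constant outside a collar, $f'\geq 0$) so that both terms of $\Omega$ are nonnegative and never simultaneously zero --- including in the transition collar, where the term involving $f f'\,dr\wedge d\vartheta$ and the term involving $f^2\,d\vartheta$ must be balanced against each other. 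That choice of $f$ and the estimate in the collar are the real technical content of Bourgeois's proof; without them your argument is a correct plan rather than a complete proof.
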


In  Bourgeois's construction,  the projection $p: \mathbb T^2 \times N^{2n-3} \to \mathbb T^2$ can be considered as a contact fiber bundle in the sense of Lerman \cite{Lerman}. 
\begin{dfn} \cite{Lerman}
Let $(F, \eta_F)$ be a contact manifold. A fiber bundle $\pi: M\to B$ with typical fiber $F$ is called a contact fiber bundle if there is an open cover 
$(U_{i})_{i\in I}$ of $B$ with  local trivializations  $\phi_i: \pi^{-1}(U_i)\to U_i \times F$  such that for each $b \in U_i\cap U_j$,  the transition map 
$(\phi_j \circ \phi_i^{-1})_{|\{b\} \times F}$ is a  contactomorphism of $F$. 

\end{dfn}

\noindent Contact fiber bundles provide  examples of construction of higher-dimensional contact manifolds with very interesting properties (see \cite{Gironella2020, EP16}) and References therein). 
In fact, the total space $M$  of a contact fiber bundle $F \hookrightarrow M \to B$  is naturally equipped with a Jacobi structure  (see below) coming from the contact structure of each fiber. 

\section{Contact pairs and Jacobi structures}

By the characteristic subspace of a differential 1-form $\alpha \in \Omega^1(M)$ at  $m \in M$, we mean the subspace $C_m \subset T_mM$ that consists of the intersection of  ${\rm ker}(\alpha_m)$ and ${\rm ker}(d\alpha_m)$.
The  class of $\alpha$ at $m$ is $2k+1$ if  
\begin{eqnarray}
\alpha \wedge (d \alpha)^k \ne 0 \quad {\rm and} \quad  (d\alpha)^{k+1}=0.
\end{eqnarray} 
Clearly, if $\alpha$  has  constant class  then its characteristic distribution is completely integrable and it gives rise to  a regular foliation called the  characteristic foliation of $\alpha$. 
So,  1-forms of class zero are  exactly closed non-singular  1-forms while contact forms on an odd-dimensional manifold $M$ are  1-forms on $M$ of maximal class.
 Moreover,  contact forms on $M$ are 1-forms whose characteristic foliation consists of one single leaf.\\

\noindent Let $M$ be an even-dimensional manifold.   Any pair $(\alpha, \beta)$ of  1-forms of  class  $2k+1$ and $2\ell+1$, respectively for which
$ \alpha \wedge (d \alpha)^k \wedge  \beta \wedge (d \beta)^\ell$ is  a volume form  on $M$ will be called  \emph{contact pair  on $M$ of type}  $(k,\ell)$ following \cite{Bande3}. 

\medskip
\noindent The following result was proved in \cite{Bande3}:
\begin{lemma}
 To any  contact pair $(\alpha, \beta)$ on $M$,  it corresponds a pair $(E_\alpha, E_\beta)$ of commuting vector fields uniquely determined by the relations:
 $$\alpha(E_\alpha)=\beta(E_\beta)=1, \quad  \alpha(E_\beta)=\beta(E_\alpha)=0,$$ and $$ i_{E_\alpha}d \alpha=i_{E_\beta}d \alpha=i_{E_\alpha}d \beta= i_{E_\beta}d \beta=0.$$
\end{lemma}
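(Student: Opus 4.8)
The plan is to build $E_\alpha$ and $E_\beta$ fibrewise by linear algebra, then promote them to smooth global vector fields and check the bracket relation. Throughout write $\dim M = 2k+2\ell+2$. Because $(\alpha,\beta)$ is a contact pair, the conditions $\alpha\wedge(d\alpha)^k\neq 0$, $(d\alpha)^{k+1}=0$ (and symmetrically for $\beta$) force $d\alpha$ to have constant rank $2k$ and $d\beta$ constant rank $2\ell$. Hence the kernels $K_\alpha={\rm ker}\,d\alpha$ and $K_\beta={\rm ker}\,d\beta$ are smooth subbundles of ranks $2\ell+2$ and $2k+2$, and the characteristic distributions are $C_\alpha=K_\alpha\cap{\rm ker}\,\alpha$ and $C_\beta=K_\beta\cap{\rm ker}\,\beta$.

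The crux is to show that the common kernel $D=K_\alpha\cap K_\beta$ has rank exactly $2$ and that the pair $(\alpha,\beta)$ restricts on $D$ to a fibrewise isomorphism onto $\reals^2$. A dimension count gives, at each point, $\dim D\geq(2\ell+2)+(2k+2)-\dim M=2$. For the opposite bound I would exploit the volume form $\Omega=\alpha\wedge(d\alpha)^k\wedge\beta\wedge(d\beta)^\ell$: if $X\in D$ satisfies in addition $\alpha(X)=\beta(X)=0$, then $i_X$ kills every factor of $\Omega$ (since $i_X\alpha=i_X\beta=0$ and $i_X(d\alpha)^k=i_X(d\beta)^\ell=0$), so the Leibniz rule gives $i_X\Omega=0$, whence $X=0$ because $\Omega$ is nowhere vanishing. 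In other words $C_\alpha\cap C_\beta=0$, which is exactly the statement that $(\alpha,\beta)\colon D\to\reals^2$ is injective; thus $\dim D\leq 2$, so $\dim D=2$ and the restriction is an isomorphism on each fibre. I then define $E_\alpha$ and $E_\beta$ as the unique sections of $D$ determined by $(\alpha(E_\alpha),\beta(E_\alpha))=(1,0)$ and $(\alpha(E_\beta),\beta(E_\beta))=(0,1)$; this simultaneously yields all eight scalar relations, and smoothness is automatic because $D$ is a constant-rank subbundle and the defining linear system has smooth coefficients.

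It remains to prove $[E_\alpha,E_\beta]=0$. The injectivity just established shows that any vector field $Y$ is determined by the four data $\alpha(Y)$, $\beta(Y)$, $i_Y d\alpha$, $i_Y d\beta$, so it suffices to verify these vanish for $Y=[E_\alpha,E_\beta]$. For the function-valued ones I would use $d\alpha(E_\alpha,E_\beta)=E_\alpha(\alpha(E_\beta))-E_\beta(\alpha(E_\alpha))-\alpha([E_\alpha,E_\beta])$; the first two terms vanish since $\alpha(E_\beta)$ and $\alpha(E_\alpha)$ are constant, while $d\alpha(E_\alpha,E_\beta)=(i_{E_\alpha}d\alpha)(E_\beta)=0$, giving $\alpha([E_\alpha,E_\beta])=0$, and the same computation with $\beta$ gives $\beta([E_\alpha,E_\beta])=0$. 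For the form-valued ones I would use the identity $i_{[E_\alpha,E_\beta]}=\mathcal{L}_{E_\alpha}i_{E_\beta}-i_{E_\beta}\mathcal{L}_{E_\alpha}$ together with $\mathcal{L}_{E_\alpha}d\alpha=d(i_{E_\alpha}d\alpha)=0$ (Cartan's formula and $i_{E_\alpha}d\alpha=0$) and $i_{E_\beta}d\alpha=0$, which collapse both terms and yield $i_{[E_\alpha,E_\beta]}d\alpha=0$; likewise $i_{[E_\alpha,E_\beta]}d\beta=0$. Hence $[E_\alpha,E_\beta]=0$.

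The main obstacle is the fibrewise linear-algebra step, namely pinning $\dim D$ to $2$ and proving the nondegeneracy of $(\alpha,\beta)$ on $D$ via the volume form; everything else --- existence, uniqueness, and commutativity --- then follows from short Cartan-calculus manipulations. A secondary care point is verifying the constancy of the ranks of $d\alpha$ and $d\beta$, which is what upgrades the pointwise construction to genuinely smooth global vector fields.
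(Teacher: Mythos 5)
The paper does not actually prove this lemma: it quotes it from Bande--Hadjar \cite{Bande3}, so there is no in-paper argument to measure your proof against. Judged on its own, your proof is correct and complete. The rank-pinning step is right: $(d\alpha)^{k+1}=0$ bounds the rank of $d\alpha$ above by $2k$, while the volume-form condition forces $(d\alpha)^k\neq 0$ pointwise, so ${\rm ker}\, d\alpha$ and ${\rm ker}\, d\beta$ are smooth subbundles of ranks $2\ell+2$ and $2k+2$, and $D={\rm ker}\, d\alpha\cap{\rm ker}\, d\beta$ has fibre dimension at least $2$ by the dimension count. The key injectivity step is also sound: if $X\in D$ with $\alpha(X)=\beta(X)=0$, then contraction by $X$ kills every factor of $\Omega=\alpha\wedge(d\alpha)^k\wedge\beta\wedge(d\beta)^\ell$ (using $i_X(d\alpha)^k=k\,(i_Xd\alpha)\wedge(d\alpha)^{k-1}=0$ and likewise for $\beta$), and $i_X\Omega=0$ forces $X=0$ because contraction into a nowhere-vanishing top form is fibrewise injective. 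This pins ${\rm rank}\, D=2$, makes $X\mapsto(\alpha(X),\beta(X))$ a fibrewise isomorphism onto $\mathbb{R}^2$, and delivers existence, uniqueness, and smoothness of $(E_\alpha,E_\beta)$ simultaneously. The commutativity argument is equally clean: the intrinsic formula for $d\alpha(E_\alpha,E_\beta)$ gives $\alpha([E_\alpha,E_\beta])=\beta([E_\alpha,E_\beta])=0$ since the scalar pairings are constant, and $i_{[E_\alpha,E_\beta]}d\alpha=\mathcal{L}_{E_\alpha}i_{E_\beta}d\alpha-i_{E_\beta}\mathcal{L}_{E_\alpha}d\alpha=0$ by Cartan's formula together with $i_{E_\beta}d\alpha=0$ and $\mathcal{L}_{E_\alpha}d\alpha=d(i_{E_\alpha}d\alpha)=0$; your earlier injectivity then forces $[E_\alpha,E_\beta]=0$. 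The one point worth stating explicitly in a final write-up is why $D$ is a smooth subbundle (kernel of the constant-rank bundle map $X\mapsto(i_Xd\alpha,i_Xd\beta)$), but you flag this and the justification is immediate.
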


\noindent The pair  $(E_\alpha, E_\beta)$  is called the \emph{pair of Reeb vector fields} for  $(\alpha, \beta)$.

 \begin{ex}\cite{Bande3} \label{Example C.P}
 Let $(M^{2k+1}, \alpha)$ and $(N^{2\ell+1}, \beta)$ be two contact manifolds and $P = M^{2k+1}\times N^{2\ell+1}$. Then  $(\alpha, \beta)$ is a contact pair of type $(k, \ell)$ on $P,$ called  a \emph{product contact pair}. Its Reeb fields are those of the two contact forms considered as vector
fields on $P$.
 \end{ex}
 
 \begin{rmk}\label{Darboux}
 The above example is exactly the local model for contact pair. In fact  Theorem 3.1 from \cite{Bande3} states that any point $m$ of a  smooth manifold $M$ endowed with a contact pair has a  \emph{Darboux neighborhood}  that is the product of  two contact manifolds $(U_1, \alpha)$ and $(U_2, \beta)$. 
 \end{rmk}
 
\noindent  Obviously,  any manifold equipped with a contact pair  $(\alpha, \beta)$  is orientable. Moreover,  the characteristic foliations of $\alpha$ and $\beta$  are  two  transverse and complementary  foliations whose leaves  are contact manifolds of dimension  $2k+1$ and $2\ell+1,$ respectively. 
Notice that  any closed 2-dimensional manifold equipped with a contact pair of type (0,0), that is a pair of closed 1-forms $(\alpha, \beta)$ with $\alpha \wedge \beta \ne 0$, is diffeomorphic to the torus $\mathbb T^2$.  We are interested in the case where $k \geq 1$ or $\ell \geq 1$. In the sequel, we assume $k \geq 1$.\\

Contact pairs are  closely related to Jacobi structures. Recall that a Jacobi structure on a manifold $M$  in the sense of Lichnerowicz \cite{Lichne} ( see \cite{Vitagliano-Wade2} and References therein) is defined by a bivector  field $\Lambda$ and a vector field $E$ such that 
\begin{eqnarray}
[\Lambda, \Lambda]= 2 E\wedge \Lambda \quad {\rm and} \quad  [\Lambda, E]=0.
\end{eqnarray}
There is an  associated Jacobi bracket which is  a Lie bracket on $C^\infty(M)$ defined by:
\begin{eqnarray}
\{f, g\}= \Lambda(df, dg)+ f (E\cdot g)  - g(E \cdot f),\quad \forall~  f, g \in C^\infty(M).
\end{eqnarray}
This Lie bracket is local in the sense of Kirillov \cite{Kirillov}, i.e. 
\begin{eqnarray}
{\rm supp} (\{f, g\}) \subset {\rm supp} (f) \cap {\rm supp} (g).
\end{eqnarray}
Any contact pair  $(\alpha, \beta)$ on $M^{2n}$ determines a pair of Jacobi structures on $M$  whose characteristic foliations are transverse and complementary. Indeed,  the contact structures on leaves of the characteristic foliation $\mathcal{F}$ (resp.  $\mathcal{G}$) of $\alpha$  (resp. $\beta$)  determines a Jacobi bracket defined as follows:  to each $f \in C^\infty(M),$ it  is associated a vector field  $X_f^\alpha$  (resp. $X_f^\beta$) tangent to leaves of $\mathcal{F}$  (resp.  $\mathcal{G}$).   Furthermore, if  $L$ is a leaf of the characteristic foliation of $\alpha$ then the vector field  $X_f^\alpha$ satisfies:
\begin{eqnarray}
\alpha_L (X_f^\alpha)= f_{|_{L}}  \quad {\rm and}\quad( L_{X_f^\alpha} \alpha_L) \wedge \alpha_L=0,
\end{eqnarray}
where $\alpha_L$ is the  contact form induced by $\alpha$ on the leaf $L$. A similar expression for $X_f^\beta$ can be written.  Moreover,  the Jacobi brackets are defined as follows:
\begin{eqnarray}
\{f, g\}_{\alpha}= \alpha ([X_f^\alpha, X_g^\alpha ]) \quad \{f, g\}_{\beta}= \beta ([X_f^\beta, X_g^\beta ]).
\end{eqnarray}
In particular, we call $E^\alpha$  the  unique vector field defined by:

\begin{eqnarray}
\alpha_L (E^\alpha)= 1_{|_{L}},
\end{eqnarray}
where  $1_{|_{L}}$ is the restriction of the constant function $f=1$ on the leaf $L$. We have a similar expression for $E^\beta$. The bivectors associated with the Jacobi brackets are given by:
\begin{eqnarray}
\Lambda^\alpha(df, dg)= \{f, g\}_{\alpha}- f (E^{^\alpha}\cdot g)+ g (E^{^\alpha} \cdot f)
\end{eqnarray}
 and 
 \begin{eqnarray}
 \Lambda^\beta(df, dg)= \{f, g\}_{\beta}- f (E^{^\beta} \cdot g)+ g (E^{^\beta}\cdot f).
 \end{eqnarray}  
 Hence $(\Lambda^\alpha, E^\alpha)$ and  $(\Lambda^\beta, E^\beta)$ are two Jacobi structures  on $M$ whose characteristic foliations are exactly  those of $\alpha$ and $\beta$.

\section{Deformation of pairs of   codimension 1 foliations}
Codimension one  foliations considered here are those  defined by a  nowhere vanishing  differential 1-form. Tischler' s theorem \cite{Tischler} states that  any compact manifold that has a closed nowhere vanishing 
1-form is a fibration over a circle.
As mentioned above, this paper is devoted to the study of simultaneous deformations of  pairs of confoliations. It's known that  manifolds that admit such a  pair of foliations have  interesting properties. In particular, we have:

\begin{thm} \cite{Cardona-Miranda}
Suppose  $M$ is  a  compact connected manifold which admits  two  closed 1-forms $\alpha_0$ and  $\beta_0$ that are linearly independent at every point of $M$.
Then $M$ is a fibration over $\mathbb T^2$.
\end{thm}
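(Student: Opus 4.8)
The plan is to adapt the proof of Tischler's theorem \cite{Tischler} to the pair $(\alpha_0,\beta_0)$, replacing the open condition ``nowhere vanishing'' used there by the open condition ``pointwise linearly independent'', and then to integrate both forms simultaneously so as to obtain a submersion onto $\mathbb{T}^2$. Concretely, I would fix closed $1$-forms $\gamma_1,\dots,\gamma_b$ representing a $\mathbb{Z}$-basis of the image of $H^1(M;\mathbb{Z})$ in $H^1(M;\mathbb{R})$ (a full lattice, since $M$ is compact). Writing $[\alpha_0]=\sum_i a_i[\gamma_i]$ and $[\beta_0]=\sum_i b_i[\gamma_i]$ with real coefficients, the differences are exact, so $\alpha_0=\sum_i a_i\gamma_i+dh$ and $\beta_0=\sum_i b_i\gamma_i+dk$ for some smooth functions $h,k$. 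Replacing the coefficients by rationals $a_i',b_i'$ very close to $a_i,b_i$, the closed forms $\alpha=\sum_i a_i'\gamma_i+dh$ and $\beta=\sum_i b_i'\gamma_i+dk$ are $C^0$-close to $\alpha_0$ and $\beta_0$ and have rational periods.

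The key point, and the main technical obstacle, is that pointwise linear independence is an open and $C^0$-stable condition that must be controlled for the two forms simultaneously. Since $M$ is compact and $\alpha_0\wedge\beta_0$ is a nowhere-vanishing section of $\Lambda^2T^*M$, its norm is bounded below by some $\varepsilon>0$; hence if the approximations above are taken close enough, then $\alpha\wedge\beta\neq 0$ at every point as well, so $\alpha,\beta$ remain pointwise linearly independent. This is exactly the pairwise analogue of the nowhere-vanishing estimate underlying Tischler's argument, and the only genuinely delicate step is arranging the rational approximation of both forms while keeping $\alpha\wedge\beta$ from vanishing anywhere.

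Next I would clear denominators. Choosing a common integer $N$ so that the classes $[N\alpha]$ and $[N\beta]$ are integral, and noting that rescaling does not affect linear independence, one has $(N\alpha)\wedge(N\beta)=N^2\,\alpha\wedge\beta\neq 0$. Because $N\alpha$ and $N\beta$ have integer periods, integration along paths defines smooth maps $f,g\colon M\to \mathbb{R}/\mathbb{Z}=S^1$ with $df=N\alpha$ and $dg=N\beta$ (the values are well defined modulo $\mathbb{Z}$ precisely because the periods are integers). The product $F=(f,g)\colon M\to\mathbb{T}^2$ then has differential $dF=(N\alpha,N\beta)$, which is of rank $2$ at every point by linear independence, so $F$ is a submersion.

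Finally, since $M$ is compact the map $F$ is proper; a proper submersion has open and closed image in the connected manifold $\mathbb{T}^2$, hence is surjective, and by Ehresmann's fibration theorem it is a locally trivial fiber bundle. Therefore $M$ is a fibration over $\mathbb{T}^2$, as asserted. Apart from the simultaneous rational approximation preserving pointwise linear independence discussed above, every step is a straightforward two-dimensional version of the single-form construction combined with Ehresmann's theorem.
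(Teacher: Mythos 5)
Your proof is correct. The paper itself gives no proof of this statement---it is quoted directly from the cited reference of Cardona and Miranda---and your argument (rational approximation of the periods \`a la Tischler, with compactness guaranteeing that $\alpha\wedge\beta$ stays nonvanishing, followed by rescaling to integer periods and Ehresmann's theorem) is essentially the same adaptation of Tischler's theorem that the cited source uses.
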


\begin{dfn}
Let $\alpha_0$ and  $\beta_0$ be two  closed 1-forms on $M$   that are linearly independent at every point $m\in M$.
A linear deformation of $(\alpha_0, \beta_0)$ is a  pair $\left(\alpha_t, \beta_t\right)$ of  1-forms of the type:
\begin{eqnarray} \label{linear deformation}
 \alpha_t=\alpha_0+t\alpha \quad {\rm and} \quad  \beta_t=\beta_0+t\beta, ~ ~ ~ t \in \mathbb R,
 \end{eqnarray}  
 for some  $\alpha$ and $\beta \in \Omega^1(M)$.
\end{dfn}

\noindent We obtain the following result:
\begin{thm}\label{thm1}
Let $M$ be a closed  and oriented even-dimensional manifold  and let $\mathcal{F}^1$ and $ \mathcal{F}^2$ be  two foliations in  $M$  defined  by two  linearly independent closed 1-forms $\alpha_0$ and  $\beta_0$, respectively. Consider the linear deformation $(\alpha_t, \beta_t)$ given by:  
 \begin{eqnarray} \label{linear deformation}
 \alpha_t=\alpha_0+t\alpha \quad {\rm and} \quad  \beta_t=\beta_0+t\beta, ~ \forall ~ t \in \mathbb R,
 \end{eqnarray}  
 \noindent where $\alpha$ and $\beta$ are two  1-forms on $M.$
Then  $\left(\alpha_t, \beta_t\right)$  is  a contact pair of type $(k, \ell)$  for $t > 0$   whose associated pair of Reeb vector fields has the form $( { 1 \over t} X,  { 1 \over t} Y)$
(where  $X$  $Y$  are vector fields on $M$)   if only if $\left(\alpha, \beta\right)$  is a  contact pair of type $(k, \ell)$ whose pair of  Reeb vector fields 
$(E_\alpha, E_\beta)$ satisfies the compatibility conditions $ \alpha_0(E_\alpha)=\alpha_0(E_\beta)= \beta_0(E_\alpha)=\beta_0(E_\beta)=0.$ 
\end{thm}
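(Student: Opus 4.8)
The plan is to exploit that $\alpha_0$ and $\beta_0$ are closed, so that $d\alpha_t=t\,d\alpha$ and $d\beta_t=t\,d\beta$, and then to expand the candidate volume form as a polynomial in $t$. A direct computation gives
\[
\alpha_t\wedge(d\alpha_t)^k\wedge\beta_t\wedge(d\beta_t)^\ell=t^{k+\ell}\,(\alpha_0+t\alpha)\wedge(d\alpha)^k\wedge(\beta_0+t\beta)\wedge(d\beta)^\ell,
\]
whose term of highest order in $t$ equals $t^{k+\ell+2}$ times $\nu:=\alpha\wedge(d\alpha)^k\wedge\beta\wedge(d\beta)^\ell$. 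The strategy is to show that the three lower-order cross terms vanish \emph{precisely} under the four compatibility conditions, so that the whole expression collapses to $t^{k+\ell+2}\nu$. Combined with the immediate implication $(d\alpha)^{k+1}=0\Rightarrow(d\alpha_t)^{k+1}=t^{k+1}(d\alpha)^{k+1}=0$ (and likewise for $\beta$), this controls the two class conditions of a contact pair, while the Reeb relations will follow by a simple rescaling.

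For the ``if'' direction I would first verify that the rescaled pair $(\tfrac1t E_\alpha,\tfrac1t E_\beta)$ satisfies the Reeb relations of the Lemma for $(\alpha_t,\beta_t)$. The pairings reduce correctly: e.g. $\alpha_t(\tfrac1t E_\alpha)=\tfrac1t\alpha_0(E_\alpha)+\alpha(E_\alpha)$ equals $1$ and the off-diagonal pairings equal $0$ exactly because of the conditions $\alpha_0(E_\alpha)=\alpha_0(E_\beta)=\beta_0(E_\alpha)=\beta_0(E_\beta)=0$, whereas $i_{\frac1t E_\alpha}d\alpha_t=i_{E_\alpha}d\alpha=0$ and its analogues are immediate from $d\alpha_t=t\,d\alpha$ and the contact-pair relations for $(\alpha,\beta)$. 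Since the Lemma asserts that these relations determine the Reeb pair uniquely, once $(\alpha_t,\beta_t)$ is shown to be a contact pair its Reeb fields must be $(\tfrac1t E_\alpha,\tfrac1t E_\beta)$, which has the required form with $X=E_\alpha,\ Y=E_\beta$.

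The main obstacle is the volume-form computation, i.e. the vanishing of the cross terms, and the key device I would use is contraction by the Reeb fields. Because $i_{E_\alpha}$ annihilates $d\alpha$, $\beta$ and $d\beta$, writing $\nu=\alpha\wedge\omega$ with $\omega=(d\alpha)^k\wedge\beta\wedge(d\beta)^\ell$ gives $i_{E_\alpha}\nu=\omega\neq0$, and for any $1$-form $\gamma$ one obtains $\gamma\wedge(d\alpha)^k\wedge\beta\wedge(d\beta)^\ell=\gamma(E_\alpha)\,\nu$; the symmetric contraction by $E_\beta$ yields $\alpha\wedge(d\alpha)^k\wedge\gamma\wedge(d\beta)^\ell=\gamma(E_\beta)\,\nu$. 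Taking $\gamma=\alpha_0$ and $\gamma=\beta_0$ shows the two first-order cross terms equal $\alpha_0(E_\alpha)\,\nu$ and $\beta_0(E_\beta)\,\nu$, hence vanish. For the zeroth-order term, rewrite it as $\alpha_0\wedge\beta_0\wedge\mu$ with $\mu=(d\alpha)^k\wedge(d\beta)^\ell$; a single contraction by $E_\alpha$ (which annihilates $\mu$) gives $i_{E_\alpha}(\alpha_0\wedge\beta_0\wedge\mu)=\alpha_0(E_\alpha)\,\beta_0\wedge\mu-\beta_0(E_\alpha)\,\alpha_0\wedge\mu=0$, and since $i_{E_\alpha}\nu\neq0$, writing this term as $c\,\nu$ forces $c=0$. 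Hence $\alpha_t\wedge(d\alpha_t)^k\wedge\beta_t\wedge(d\beta_t)^\ell=t^{k+\ell+2}\nu$, a volume form for every $t>0$, which completes this direction.

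For the converse I would expand each defining relation of the given Reeb pair $(\tfrac1t X,\tfrac1t Y)$ of $(\alpha_t,\beta_t)$ as an affine function of $1/t$. For example $\alpha_t(\tfrac1t X)=\tfrac1t\alpha_0(X)+\alpha(X)$ must equal $1$ for all $t>0$, which forces $\alpha_0(X)=0$ and $\alpha(X)=1$ separately (pointwise); carrying this out for all four pairings and the four interior-product relations simultaneously produces the complete set of Reeb relations for $(\alpha,\beta)$ with $E_\alpha=X,\ E_\beta=Y$, together with the four compatibility conditions. Finally $(d\alpha)^{k+1}=t^{-(k+1)}(d\alpha_t)^{k+1}=0$ and likewise for $\beta$, and the cross-term vanishing established above (now available since compatibility holds) gives $\alpha\wedge(d\alpha)^k\wedge\beta\wedge(d\beta)^\ell=t^{-(k+\ell+2)}\,\alpha_t\wedge(d\alpha_t)^k\wedge\beta_t\wedge(d\beta_t)^\ell$, a volume form; therefore $(\alpha,\beta)$ is a contact pair of type $(k,\ell)$ with Reeb fields satisfying the required compatibility, which finishes the proof.
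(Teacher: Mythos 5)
Your proof is correct, and your forward (``if'') direction coincides with the paper's: the same polynomial expansion in $t$ after using $d\alpha_t = t\,d\alpha$, $d\beta_t = t\,d\beta$, with the cross terms killed by contraction identities against the Reeb fields (the paper isolates exactly these identities as two preparatory lemmas), and the Reeb fields of $(\alpha_t,\beta_t)$ identified by uniqueness. The converse is where you genuinely depart from the paper. The paper first shows the coefficient $C\Omega=\alpha_0\wedge(d\alpha)^k\wedge\beta_0\wedge(d\beta)^\ell$ vanishes, then proves the middle coefficient $B$ is nonnegative by a small-$t$ positivity argument and has zero integral by Stokes' theorem (this is where closedness of $\alpha_0,\beta_0$ and compactness of $M$ enter), concluding $B\equiv 0$; only afterwards does it extract the Reeb and compatibility relations from the $1/t$-expansion. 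You invert this order: you first extract from the hypothesis $(E_{\alpha_t},E_{\beta_t})=\left(\tfrac1t X,\tfrac1t Y\right)$ all the pointwise relations $\alpha(X)=1$, $\beta(X)=\alpha(Y)=0$, $\beta(Y)=1$, $i_Xd\alpha=i_Xd\beta=i_Yd\alpha=i_Yd\beta=0$, together with the four compatibility identities, and then note that the contraction identities are purely pointwise algebraic facts needing only these relations --- not that $(\alpha,\beta)$ is already a contact pair --- so they force all lower-order terms to vanish. This buys something real: your converse never invokes Stokes' theorem or compactness, so it establishes the implication on any (not necessarily closed) manifold once the assumed $1/t$-form of the Reeb fields is granted, whereas the paper's integral argument is tied to $M$ being closed. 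One small repair is needed in your converse: you cannot literally ``write the zeroth-order term as $c\,\nu$'' there, since at that stage $\nu=\alpha\wedge(d\alpha)^k\wedge\beta\wedge(d\beta)^\ell$ is not yet known to be a volume form; but your contraction computation shows that $i_X$ annihilates this top-degree form, and a top-degree form contracted to zero by a nowhere-vanishing vector field (here $\alpha(X)=1$ guarantees $X\neq 0$) vanishes identically, so the conclusion stands with a one-line fix.
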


The proof of  Theorem \ref{thm1} will be divided in two parts. First, we will establish the proposition:

\begin{prop}\label{prop1}
Let $M$ be a  $2n$-dimensional manifold. Let $\mathcal{F}^1$ and $ \mathcal{F}^2$ be  two foliations in  $M$  defined  by two  linearly independent closed non-singular 1-forms $\alpha_0$ and  $\beta_0$, respectively. Consider the linear deformation $(\alpha_t, \beta_t)$ given by:  
 \begin{eqnarray} 
 \alpha_t=\alpha_0+t\alpha \quad {\rm and} \quad  \beta_t=\beta_0+t\beta, ~ \forall ~ t \in \mathbb R,
 \end{eqnarray}  
 where $\alpha$ and $\beta$ are two  1-forms on $M.$ Suppose $\left(\alpha, \beta\right)$  is a  contact pair of type $(k, \ell)$  whose associated pair of  Reeb vector fields 
$(E_\alpha, E_\beta)$ satisfies the compatibility relations  $ \alpha_0(E_\alpha)=\alpha_0(E_\beta)=\beta_0(E_\beta)=\beta(E_\alpha)=0$.
Then  $\left(\alpha_t, \beta_t \right)$  is  a contact pair of type $(k, \ell)$ for $t > 0.$ Moreover,  its associated pair of Reeb vector fields is $( { 1 \over t} E_\alpha,  { 1 \over t} E_\beta).$ 
\end{prop}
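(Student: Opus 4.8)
The plan is to check the three defining conditions of a contact pair of type $(k,\ell)$ for the deformed pair $(\alpha_t,\beta_t)$ --- namely $(d\alpha_t)^{k+1}=0$, $(d\beta_t)^{\ell+1}=0$, and that $\Omega_t:=\alpha_t\wedge(d\alpha_t)^k\wedge\beta_t\wedge(d\beta_t)^\ell$ is a volume form --- and then to identify its Reeb pair. Since $\alpha_0$ and $\beta_0$ are closed, $d\alpha_t=t\,d\alpha$ and $d\beta_t=t\,d\beta$, so the two integrability conditions reduce at once to $(d\alpha)^{k+1}=0$ and $(d\beta)^{\ell+1}=0$, which hold because $(\alpha,\beta)$ is already a contact pair.

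Next I would propose $(\tfrac1t E_\alpha,\tfrac1t E_\beta)$ as the Reeb pair of $(\alpha_t,\beta_t)$ and verify the relations of the Reeb lemma against it. The normalization and duality relations are one-line computations, e.g. $\alpha_t(\tfrac1t E_\alpha)=\tfrac1t\alpha_0(E_\alpha)+\alpha(E_\alpha)=1$ and $\alpha_t(\tfrac1t E_\beta)=\tfrac1t\alpha_0(E_\beta)+\alpha(E_\beta)=0$, together with their $\beta_t$ analogues; the compatibility conditions $\alpha_0(E_\alpha)=\alpha_0(E_\beta)=\beta_0(E_\alpha)=\beta_0(E_\beta)=0$ are exactly what kills the $\alpha_0,\beta_0$ contributions. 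The interior-product relations $i_{E_\alpha}d\alpha_t=\cdots=i_{E_\beta}d\beta_t=0$ follow immediately from $d\alpha_t=t\,d\alpha$, $d\beta_t=t\,d\beta$ and the corresponding relations for $(\alpha,\beta)$.

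The crux is showing that $\Omega_t$ is nowhere zero. Writing $\omega:=\alpha\wedge(d\alpha)^k\wedge\beta\wedge(d\beta)^\ell$ for the volume form coming from $(\alpha,\beta)$, both $\Omega_t$ and $\omega$ are top-degree, so $\Omega_t=f_t\,\omega$ for a function $f_t$, and it suffices to show $f_t\neq 0$ for $t>0$. Rather than expand $\Omega_t$ and track the cross terms produced by $\alpha_t=\alpha_0+t\alpha$ and $\beta_t=\beta_0+t\beta$, I would extract $f_t$ by a double interior product with $E_\alpha,E_\beta$. Using $d\alpha_t=t\,d\alpha$, $d\beta_t=t\,d\beta$, the relations $i_{E_\alpha}d\alpha=i_{E_\beta}d\alpha=i_{E_\alpha}d\beta=i_{E_\beta}d\beta=0$, and the compatibility conditions, one computes
\[ i_{E_\beta}i_{E_\alpha}\Omega_t=t^{k+\ell+2}\,(d\alpha)^k\wedge(d\beta)^\ell,\qquad i_{E_\beta}i_{E_\alpha}\omega=(d\alpha)^k\wedge(d\beta)^\ell. \]
Since contraction is $C^\infty(M)$-linear, $i_{E_\beta}i_{E_\alpha}\Omega_t=f_t\,i_{E_\beta}i_{E_\alpha}\omega$; and because $\omega=\pm\,\alpha\wedge\beta\wedge(d\alpha)^k\wedge(d\beta)^\ell$ is nowhere zero, the $(2n-2)$-form $(d\alpha)^k\wedge(d\beta)^\ell$ is nowhere zero as well. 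Comparing the two expressions then forces $f_t=t^{k+\ell+2}$ pointwise, so $\Omega_t=t^{k+\ell+2}\omega$ is a volume form for every $t>0$.

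This makes $(\alpha_t,\beta_t)$ a contact pair of type $(k,\ell)$, and by the uniqueness clause of the Reeb lemma the pair $(\tfrac1t E_\alpha,\tfrac1t E_\beta)$ checked above is precisely its pair of Reeb fields. I expect this third step --- the nonvanishing of $\Omega_t$ --- to be the only genuine obstacle; the double-contraction device is what makes it clean, since it avoids having to show separately that the $t$-independent term $\alpha_0\wedge(d\alpha)^k\wedge\beta_0\wedge(d\beta)^\ell$ vanishes and that the two linear-in-$t$ cross terms cancel.
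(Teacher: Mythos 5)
Your proof is correct, and its key step takes a genuinely different (and tidier) route than the paper's. The paper first proves two auxiliary contraction lemmas (identities obtained by contracting vanishing $(2n+1)$-forms such as $\omega\wedge\alpha\wedge(d\alpha)^k\wedge\beta\wedge(d\beta)^\ell$ with the Reeb fields), then expands $\alpha_t\wedge(d\alpha_t)^k\wedge\beta_t\wedge(d\beta_t)^\ell$ into four terms and uses those lemmas together with the compatibility conditions to kill the three unwanted terms one by one (passing to Darboux neighborhoods, which is in fact unnecessary since the lemmas are global). Your argument runs the same underlying mechanism --- contraction with $E_\alpha$ and $E_\beta$ --- but applies it once to the whole form: writing $\Omega_t=f_t\,\omega$, computing $i_{E_\beta}i_{E_\alpha}\Omega_t=t^{k+\ell+2}(d\alpha)^k\wedge(d\beta)^\ell$ and $i_{E_\beta}i_{E_\alpha}\omega=(d\alpha)^k\wedge(d\beta)^\ell$, and using $C^\infty(M)$-linearity of contraction plus the nonvanishing of $(d\alpha)^k\wedge(d\beta)^\ell$ (which follows from the nonvanishing of $\omega$, as you note) to force $f_t\equiv t^{k+\ell+2}$. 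This yields the identity $\Omega_t=t^{k+\ell+2}\omega$ in one stroke, with no term-by-term case analysis; what the paper's decomposition buys in exchange is reusability, since its second lemma is invoked again in the converse proposition, where no candidate Reeb pair is available in advance and your global shortcut would not transfer. One point to flag: your verification that $\beta_t\bigl(\tfrac1t E_\alpha\bigr)=0$ uses $\beta_0(E_\alpha)=0$, whereas the statement as printed assumes $\beta(E_\alpha)=0$, a condition that is automatic for any contact pair and hence vacuous; you silently read the hypothesis as $\beta_0(E_\alpha)=0$, which is clearly the intended one (it is what appears in the paper's main theorem and its converse, and the paper's own proof needs it too for the Reeb identification), but strictly speaking the hypothesis as literally stated does not suffice for the Reeb-pair claim.
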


To prove  Proposition \ref{prop1}, we use the following  two lemmas:

\begin{lemma}\label{lemma5.3}
Let $M$ be an  even-dimensional manifold equipped with a  contact pair $\left(\alpha, \beta\right)$ of type $(k, \ell).$ Denote by $(E_\alpha, E_\beta)$ its associated pair of  Reeb vector fields.
 For any 1-forms $\omega$  and $\overline{\omega}$ on $M$,we have:
\begin{equation}\label{P1} \omega \wedge\left(d\alpha\right)^k\wedge\beta\wedge\left(d\beta\right)^\ell\  = \omega(E_\alpha) \alpha \wedge\left(d\alpha\right)^k\wedge\beta\wedge\left(d\beta\right)^\ell; \ \end{equation}
\begin{equation}\label{P2} \omega \wedge \alpha \wedge  \left(d\alpha\right)^k\wedge\left(d\beta\right)^\ell\  = -\omega(E_\beta) \alpha \wedge\left(d\alpha\right)^k\wedge\beta\wedge\left(d\beta\right)^\ell.\ \end{equation}
\end{lemma}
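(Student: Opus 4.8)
The plan is to prove the two pointwise identities by exploiting the fact that both sides are top-degree forms on a $2n$-dimensional manifold, so each is a scalar multiple of the reference volume form $\alpha\wedge(d\alpha)^k\wedge\beta\wedge(d\beta)^\ell$ at every point. Since this reference form is nowhere zero (it is the defining volume form of the contact pair), for equation \eqref{P1} it suffices to check that the two sides agree after contracting with a basis of $T_mM$; equivalently, I would show their difference annihilates $T_mM$. Concretely, I would contract the identity $\omega\wedge(d\alpha)^k\wedge\beta\wedge(d\beta)^\ell$ against $E_\alpha$ and compare with the contraction of the right-hand side against $E_\alpha$, using the Reeb relations $\alpha(E_\alpha)=1$, $\beta(E_\alpha)=0$, and the four vanishing interior-product conditions $i_{E_\alpha}d\alpha=i_{E_\alpha}d\beta=i_{E_\beta}d\alpha=i_{E_\beta}d\beta=0$.

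The cleanest route is a direct interior-product computation. For \eqref{P1}, I would apply $i_{E_\alpha}$ to the $(2n+1)$-form $\alpha\wedge\omega\wedge(d\alpha)^k\wedge\beta\wedge(d\beta)^\ell$, which vanishes identically for degree reasons on a $2n$-manifold. Expanding $i_{E_\alpha}$ by the Leibniz rule across the wedge factors and using $i_{E_\alpha}\alpha=\alpha(E_\alpha)=1$, $i_{E_\alpha}\omega=\omega(E_\alpha)$, $i_{E_\alpha}\beta=0$, and $i_{E_\alpha}(d\alpha)^k=i_{E_\alpha}(d\beta)^\ell=0$ (these last equalities follow from $i_{E_\alpha}d\alpha=i_{E_\alpha}d\beta=0$ together with the Leibniz rule), only two terms survive, and rearranging them yields exactly \eqref{P1} after accounting for the sign from moving $i_{E_\alpha}$ past the factor $\omega$. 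For \eqref{P2}, I would run the symmetric argument using $E_\beta$: apply $i_{E_\beta}$ to $\beta\wedge\alpha\wedge\omega\wedge(d\alpha)^k\wedge(d\beta)^\ell$ (a $(2n+1)$-form, hence zero), use $i_{E_\beta}\beta=1$, $i_{E_\beta}\alpha=0$, $i_{E_\beta}\omega=\omega(E_\beta)$, and the vanishing of $i_{E_\beta}$ on $d\alpha$ and $d\beta$; the surviving terms reassemble into \eqref{P2}, with the minus sign arising from the permutation needed to bring $\beta$ and $\alpha$ into the displayed order.

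The main obstacle, and the only place care is required, is bookkeeping the Koszul signs: tracking the $(-1)$ factors that appear when the contracting vector field is commuted past $\omega$, $\alpha$, $\beta$, and the even-degree factors $(d\alpha)^k$ and $(d\beta)^\ell$. Because $d\alpha$ and $d\beta$ are $2$-forms, their powers carry even degree and commute with everything up to sign $+1$, so the sign in \eqref{P1} is clean; in \eqref{P2} the explicit minus sign is forced by the transposition of $\alpha$ and $\beta$ relative to the reference ordering. Once these signs are verified, the identities follow immediately, since the reference volume form is nowhere vanishing and the scalar coefficients $\omega(E_\alpha)$ and $-\omega(E_\beta)$ are exactly what the contraction computation produces.
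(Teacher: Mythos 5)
Your proposal is correct and follows essentially the same route as the paper: both form the $(2n+1)$-form obtained by wedging $\omega$ (resp.\ the reordered factors) with $\alpha\wedge(d\alpha)^k\wedge\beta\wedge(d\beta)^\ell$, note it vanishes for degree reasons, contract with the relevant Reeb vector field $E_\alpha$ (resp.\ $E_\beta$), and expand by the Leibniz rule using $\alpha(E_\alpha)=\beta(E_\beta)=1$, $\alpha(E_\beta)=\beta(E_\alpha)=0$, and the vanishing interior products with $d\alpha$ and $d\beta$. The only differences are immaterial orderings of the wedge factors (which your sign bookkeeping handles correctly), so the two proofs coincide in substance.
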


\begin{proof}
 Since the dimension of $M$ equals $2k+2l+2$, we have:
 $$\omega\wedge\alpha\wedge\left(d\alpha\right)^k\wedge\beta\wedge\left(d\beta\right)^\ell\\=0.$$ Take the contraction  by $E_\alpha$ to obtain:
 $$i_{E_\alpha}\left(\omega\wedge\alpha\wedge\left(d\alpha\right)^k\wedge\beta\wedge\left(d\beta\right)^\ell\\\right)=0.$$ That is, 
 \begin{eqnarray*}
 0&=& \omega(E_\alpha)\alpha\wedge\left(d\alpha\right)^k\wedge\beta\wedge\left(d\beta\right)^\ell 
 - \omega\wedge\mathfrak{i}_{E_\alpha}\left(\alpha\wedge\left(d\alpha\right)^k\wedge\beta\wedge\left(d\beta\right)^\ell \right) \\
 &=&\omega(E_\alpha)\alpha\wedge\left(d\alpha\right)^k\wedge\beta\wedge\left(d\beta\right)^\ell -\omega\wedge\alpha(E_\alpha)\left(d\alpha\right)^k\wedge\beta\wedge\left(d\beta\right)^\ell \\
 && +\omega\wedge\alpha\wedge i_{E_\alpha}\left(\left(d\alpha\right)^k\wedge\beta\wedge\left(d\beta\right)^\ell \right). \end{eqnarray*} But $\alpha(E_\alpha)=1$ and $ i_{E_\alpha}\left(\left(d\alpha\right)^k\wedge\beta\wedge\left(d\beta\right)^\ell \right)=0$ therefore
 $$\omega \wedge\left(d\alpha\right)^k\wedge\beta\wedge\left(d\beta\right)^\ell\  = \omega(E_\alpha) \alpha \wedge\left(d\alpha\right)^k\wedge\beta\wedge\left(d\beta\right)^\ell.$$ By similar arguments, using $\beta$ and  contraction with $E_\beta$,  we get
 $$\omega \wedge \alpha \wedge  \left(d\alpha\right)^k\wedge\left(d\beta\right)^\ell\  = -\omega(E_\beta) \alpha \wedge\left(d\alpha\right)^k\wedge\beta\wedge\left(d\beta\right)^\ell.$$
\end{proof}

\begin{lemma}\label{lemma5.4}
Let $M$ be an  even-dimensional manifold equipped with a  contact pair $\left(\alpha, \beta\right)$ of type $(k, \ell).$ Denote by $(E_\alpha, E_\beta)$ its associated pair of  Reeb vector fields.
 For any 1-forms $\omega$  and $\overline{\omega}$ on $M$ satisfying $\omega(E_\beta)=\overline{\omega}(E_\beta)=0$, we have:
\begin{equation}\label{P3} \omega\ \wedge\left(d\alpha\right)^k  \wedge  \overline{\omega} \wedge\left(d\beta\right)^\ell =  0.\end{equation}
\end{lemma}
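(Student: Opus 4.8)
The plan is to adapt the insert-and-contract strategy of Lemma~\ref{lemma5.3}, but to contract with $E_\beta$ rather than with $E_\alpha$; this choice is forced by the hypotheses, which pin down the values $\omega(E_\beta)=\overline{\omega}(E_\beta)=0$ and say nothing about $\omega(E_\alpha)$ or $\overline{\omega}(E_\alpha)$. First I would record that the left-hand side of \eqref{P3} is a top-degree form: since $\dim M=2k+2\ell+2$, the form $\Theta:=\omega\wedge(d\alpha)^k\wedge\overline{\omega}\wedge(d\beta)^\ell$ has degree $1+2k+1+2\ell=\dim M$, so it does not vanish for trivial reasons and the Reeb data must be used.

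The main step is to wedge $\Theta$ on the left by $\beta$. The form $\beta\wedge\Theta$ then has degree $\dim M+1$ and therefore vanishes identically. Contracting this identity with $E_\beta$ and applying the derivation property of the interior product gives
$$0=i_{E_\beta}(\beta\wedge\Theta)=\beta(E_\beta)\,\Theta-\beta\wedge i_{E_\beta}\Theta=\Theta-\beta\wedge i_{E_\beta}\Theta,$$
since $\beta(E_\beta)=1$. It thus suffices to prove that $i_{E_\beta}\Theta=0$.

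Finally I would expand $i_{E_\beta}\Theta$ by the Leibniz rule and verify that every term vanishes: the terms feeding $E_\beta$ into $\omega$ or $\overline{\omega}$ vanish by the hypotheses $\omega(E_\beta)=\overline{\omega}(E_\beta)=0$, while the terms feeding $E_\beta$ into a factor of $d\alpha$ or $d\beta$ vanish because $i_{E_\beta}d\alpha=i_{E_\beta}d\beta=0$ are among the defining relations of the Reeb pair $(E_\alpha,E_\beta)$. Hence $i_{E_\beta}\Theta=0$, and the displayed identity forces $\Theta=0$, which is precisely \eqref{P3}. I do not expect a genuine obstacle: the only point needing attention is the sign bookkeeping in the Leibniz expansion, and since each individual term already vanishes the signs are immaterial; the one substantive decision is to contract with $E_\beta$, dictated by which pairings the hypotheses control.
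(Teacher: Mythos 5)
Your proposal is correct and follows essentially the same route as the paper's proof: wedge $\Theta$ with $\beta$ to get a form of degree $\dim M+1$ (hence zero), contract with $E_\beta$, and kill every term of the Leibniz expansion using $\beta(E_\beta)=1$, the hypotheses $\omega(E_\beta)=\overline{\omega}(E_\beta)=0$, and the Reeb relations $i_{E_\beta}d\alpha=i_{E_\beta}d\beta=0$. The only cosmetic difference is that you isolate $i_{E_\beta}\Theta=0$ as a single claim, whereas the paper carries the $\omega(E_\beta)$ term along and uses the hypothesis on $\omega$ at the last step.
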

\begin{proof} The proof is similar to that of the above lemma. We have:
$$\beta\wedge\omega\ \wedge\left(d\alpha\right)^k  \wedge  \overline{\omega} \wedge\left(d\beta\right)^\ell=  0$$  since the dimension of $M$ equals $2k+2l+2$. It follows
\begin{eqnarray*}
0&=& i_{E_\beta}\left(\beta\wedge\omega\ \wedge\left(d\alpha\right)^k  \wedge  \overline{\omega} \wedge\left(d\beta\right)^\ell\right) \\
&=&\beta(E_\beta)\omega\ \wedge\left(d\alpha\right)^k  \wedge  \overline{\omega} \wedge\left(d\beta\right)^\ell -\beta\wedge\textit{i}_{E_\beta}\left(\omega\ \wedge\left(d\alpha\right)^k  \wedge  \overline{\omega} \wedge\left(d\beta\right)^\ell\right) \\
&=& \omega\ \wedge\left(d\alpha\right)^k  \wedge  \overline{\omega} \wedge\left(d\beta\right)^\ell -\beta\wedge\left(\omega(E_\beta)\left(d\alpha\right)^k  \wedge  \overline{\omega} \wedge\left(d\beta\right)^\ell \right) \\
&&-\beta\wedge\left(  \omega\wedge i_{E_\beta}\left(\left(d\alpha\right)^k  \wedge  \overline{\omega} \wedge\left(d\beta\right)^\ell\right)\right)\\
&=&  \omega\ \wedge\left(d\alpha\right)^k  \wedge  \overline{\omega} \wedge\left(d\beta\right)^\ell -\omega(E_\beta) \beta\wedge \left(d\alpha\right)^k  \wedge  \overline{\omega} \wedge\left(d\beta\right)^\ell 
\end{eqnarray*}
since $0=\overline{\omega}(E_\beta)=i_{E_\beta}\left(d\alpha\right)^k=i_{E_\beta}\left(d\alpha\right)^\ell.$ Using $\omega(E_\beta)=0$, we get $$ \omega\ \wedge\left(d\alpha\right)^k  \wedge  \overline{\omega} \wedge\left(d\beta\right)^\ell=0.$$ This completes the proof.
\end{proof}

\noindent Now, we will proceed with the proof of   Proposition \ref{prop1}.
\begin{proof}
Under the assumptions in  Proposition \ref{prop1}, we have:
\begin{eqnarray}\label{1}
 \alpha_t\wedge\left(d\alpha_t\right)^k\wedge\beta_t\wedge\left(d\beta_t\right)^\ell = t^{k+\ell}\alpha_0\wedge\left(d\alpha\right)^k\wedge\beta_0\wedge\left(d\beta\right)^\ell\nonumber\\
 +t^{k+\ell+1}\alpha_0\wedge\left(d\alpha\right)^k\wedge\beta\wedge\left(d\beta\right)^\ell\nonumber\\
 +t^{k+\ell+1}\alpha\wedge\left(d\alpha\right)^k\wedge\beta_0\wedge\left(d\beta\right)^\ell\nonumber\\
 +t^{k+\ell+2}\alpha\wedge\left(d\alpha\right)^k\wedge\beta\wedge\left(d\beta\right)^\ell. ~~~
\end{eqnarray}
Let $m$ be a point in $M.$ Pick a Darboux neighborhood  $U$ of $m$ as in the proof of Lemma \ref{lemma5.3}. Using (\ref{P1}), (\ref{P2}) and (\ref{P3}), we  rewrite Equation (\ref{1}) as follows:
\begin{eqnarray}\label{2}
\alpha_t\wedge\left(d\alpha_t\right)^k\wedge\beta_t\wedge\left(d\beta_t\right)^\ell |_U=t^{k+\ell+1}\alpha_0(E_\alpha)~ \alpha\wedge\left(d\alpha\right)^k\wedge\beta\wedge\left(d\beta\right)^\ell |_U\nonumber\\
+t^{k+\ell+1} \beta_0(E_\beta)~ \alpha\wedge\left(d\alpha\right)^k\wedge\beta\wedge\left(d\beta\right)^\ell|_U \nonumber\\
 +t^{k+\ell+2}\alpha\wedge\left(d\alpha\right)^k\wedge\beta\wedge\left(d\beta\right)^\ell |_U.
\end{eqnarray} Use the compatibility conditions for $\alpha_0$ and $\beta_0$ to get

$$\alpha_t\wedge\left(d\alpha_t\right)^k\wedge\beta_t\wedge\left(d\beta_t\right)^\ell=t^{k+\ell+2}\alpha\wedge\left(d\alpha\right)^k\wedge\beta\wedge\left(d\beta\right)^\ell \ne 0, \quad  \forall~ t\ne 0.$$
Moreover, $(d\alpha_t)^{k+1}=0$ if and only if $(d\alpha)^{k+1}=0$ and $(d\beta_t)^{k+1}=0$ if and only if $(d\beta)^{k+1}=0$. 
Thus $\left(\alpha_t, \beta_t \right)$  is  a contact pair of type $(k, \ell)$ for all $t\ne 0.$  Its associated pair of Reeb vector fields is  $(E_{\alpha_t}, E_{\beta_t})=( { 1 \over t} E_\alpha,  { 1 \over t} E_\beta)$ because 
these are the unique vectors $E_{\alpha_t}$ and $ E_{\beta_t}$ that satisfy:
$$\alpha_t\left (E_{\alpha_t}\right)=1, \qquad \qquad   i_{E_{\alpha_t}} d\alpha_t = i_{E_{\alpha_t}} d\beta_t=0;$$
$$\beta_t \left (E_{\beta_t}\right)=1,  \quad {\rm and}  \qquad   i_{E_{\beta_t}} d\alpha_t = i_{E_{\beta_t}} d\beta_t=0.$$

\end{proof}
\noindent Now we will establish the  converse of Proposition \ref{prop1}. Precisely, we have:
\begin{prop}\label{inverse}
Let $M$ be a closed  and oriented even-dimensional manifold  and let $\mathcal{F}^1$ and $ \mathcal{F}^2$ be  two foliations in  $M$  defined  by two  linearly independent closed 1-forms $\alpha_0$ and  $\beta_0$, respectively. Consider the linear deformation $(\alpha_t, \beta_t)$ given by:  
 \begin{eqnarray} \label{linear deformation}
 \alpha_t=\alpha_0+t\alpha \quad {\rm and} \quad  \beta_t=\beta_0+t\beta, ~ \forall ~ t \in \mathbb R,
 \end{eqnarray}  
 \noindent where $\alpha$ and $\beta$ are two  1-forms on $M.$
If   $\left(\alpha_t, \beta_t\right)$  is  a contact pair of type $(k, \ell)$ whose  associated  Reeb vector fields $(E_{\alpha_t}, E_{\beta_t})$ has the form $( { 1 \over t} X,  { 1 \over t} Y)$  with $X, Y \in \mathfrak X(M)$, for $t > 0$ then  $\left(\alpha, \beta\right)$  is a  contact pair of type $(k, \ell)$ whose pair of  Reeb vector fields 
$(E_\alpha, E_\beta)$ satisfies  the compatibility condition: $ \alpha_0(E_\alpha)=\beta_0(E_\beta)=0$  and $ \alpha_0(E_\beta)=\beta_0(E_\alpha)=0.$
\end{prop}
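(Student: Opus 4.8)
The plan is to treat the contact-pair condition for $(\alpha_t,\beta_t)$ as a single statement about a polynomial in $t$, and to use crucially that it holds for \emph{every} $t>0$ on a \emph{closed} manifold. Since $\alpha_0,\beta_0$ are closed we have $d\alpha_t=t\,d\alpha$ and $d\beta_t=t\,d\beta$, so the integrability part is immediate: $(d\alpha_t)^{k+1}=t^{k+1}(d\alpha)^{k+1}=0$ for $t>0$ forces $(d\alpha)^{k+1}=0$, and likewise $(d\beta)^{\ell+1}=0$. Expanding the top form exactly as in~\eqref{1} gives
\[
\alpha_t\wedge(d\alpha_t)^k\wedge\beta_t\wedge(d\beta_t)^\ell = t^{k+\ell}\bigl(A_0+tA_1+t^2A_2\bigr),
\]
where $A_0,A_1,A_2$ are the three top-degree forms appearing there and $A_2=\alpha\wedge(d\alpha)^k\wedge\beta\wedge(d\beta)^\ell$. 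Everything reduces to controlling this quadratic, and the whole content of the statement is to show $A_2$ is a volume form together with the compatibility relations.

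The heart of the argument is to prove that $(\alpha,\beta)$ is a contact pair, i.e.\ $A_2\neq 0$ everywhere. Fix an orientation and a reference volume form $\mu$, and write $A_i=f_i\,\mu$. Since the left-hand side is a volume form for all $t>0$ and $M$ is connected, the sign of $f_0+tf_1+t^2f_2$ is constant, so (after orienting) $f_0+tf_1+t^2f_2>0$ on $M$ for every $t>0$. The key point is that $A_0$ and $A_1$ are \emph{exact}: using $(d\alpha)^k=d(\alpha\wedge(d\alpha)^{k-1})$, $(d\beta)^\ell=d(\beta\wedge(d\beta)^{\ell-1})$, the closedness of $\alpha_0,\beta_0$, and the vanishing of the correction terms (which all carry a factor $(d\alpha)^{k+1}$ or $(d\beta)^{\ell+1}$), one checks $A_0=-d\bigl(\alpha_0\wedge\alpha\wedge(d\alpha)^{k-1}\wedge\beta_0\wedge(d\beta)^\ell\bigr)$ and that each summand of $A_1$ is exact as well. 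Hence $\int_M A_0=\int_M A_1=0$. Now let $t\to0^+$ in $f_0+tf_1+t^2f_2>0$ to get $f_0\geq0$; with $\int_M f_0\,\mu=0$ this forces $f_0\equiv0$. Then $f_1+tf_2>0$ for $t>0$, and $t\to0^+$ gives $f_1\geq0$, whence $\int_M f_1\,\mu=0$ forces $f_1\equiv0$. Finally $t^2f_2>0$ yields $f_2>0$ everywhere, so $A_2$ is a volume form and $(\alpha,\beta)$ is a contact pair of type $(k,\ell)$. I expect this strict positivity to be the main obstacle: pointwise, a quadratic that is positive for all $t>0$ need \emph{not} have nonzero leading coefficient, and it is exactly compactness of $M$ (through $\int_M A_0=\int_M A_1=0$) that excludes a pointwise degeneration of $A_2$ as $t\to0$.

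It remains to read off the compatibility relations. With $(\alpha,\beta)$ now a contact pair, $\ker d\alpha\cap\ker d\beta$ is the rank-$2$ distribution framed by the Reeb fields $E_\alpha,E_\beta$, and because $d\alpha_t=t\,d\alpha$, $d\beta_t=t\,d\beta$, the Reeb fields $E_{\alpha_t},E_{\beta_t}$ of the deformed pair lie in this same distribution. Writing $E_{\alpha_t}=pE_\alpha+qE_\beta$ and imposing $\alpha_t(E_{\alpha_t})=1$, $\beta_t(E_{\alpha_t})=0$ yields a $2\times 2$ linear system in $p,q$ whose coefficients are precisely the four pairings $\alpha_0(E_\alpha),\alpha_0(E_\beta),\beta_0(E_\alpha),\beta_0(E_\beta)$; solving it and matching against the prescribed form $E_{\alpha_t}=\tfrac1t E_\alpha$ (recorded in Theorem~\ref{thm1}) forces $\alpha_0(E_\alpha)=\beta_0(E_\alpha)=0$, and the symmetric computation for $E_{\beta_t}=\tfrac1t E_\beta$ forces $\beta_0(E_\beta)=\alpha_0(E_\beta)=0$, which are the four claimed conditions. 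As a consistency check, feeding $A_1\equiv0$ and $A_0\equiv0$ into Lemmas~\ref{lemma5.3} and~\ref{lemma5.4} (via~\eqref{P1} and~\eqref{P2}) rewrites $A_1=\bigl(\alpha_0(E_\alpha)+\beta_0(E_\beta)\bigr)A_2$ and $A_0=\bigl(\alpha_0(E_\alpha)\beta_0(E_\beta)-\alpha_0(E_\beta)\beta_0(E_\alpha)\bigr)A_2$, recovering $\alpha_0(E_\alpha)+\beta_0(E_\beta)=0$ and $\alpha_0(E_\alpha)\beta_0(E_\beta)=\alpha_0(E_\beta)\beta_0(E_\alpha)$, consistent with the above.
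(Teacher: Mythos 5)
Your proof is correct, and its engine is the same as the paper's: expand the top-degree form as $t^{k+\ell}\left(A_0+tA_1+t^2A_2\right)$, exploit positivity for all $t>0$ together with Stokes' theorem on the closed manifold to kill the lower coefficients, and then extract the Reeb data. You diverge from the paper in two places, though, and the comparison is instructive. First, the paper disposes of the constant term $C$ (your $A_0$) in its Step 1 by simply citing Lemma \ref{lemma5.4}; as written this is problematic, since Lemma \ref{lemma5.4} presupposes a contact pair and vanishing pairings with its Reeb fields, so it can only be applied to $(\alpha_t,\beta_t)$ after one has already extracted $\alpha_0(Y)=\beta_0(Y)=0$ from the hypothesis $E_{\beta_t}=\frac{1}{t}Y$ -- a step the paper leaves implicit. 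You instead observe that $A_0$ is exact (just like the two summands of $A_1$) and kill it by the same sign-plus-integral argument; this treats $A_0$ and $A_1$ uniformly and, notably, makes the entire ``contact pair'' half of the conclusion independent of any hypothesis on the Reeb fields of $(\alpha_t,\beta_t)$, i.e.\ it proves that half under the literal hypotheses of Proposition \ref{inverse}, which (unlike Theorem \ref{thm1}) never states the $(\frac{1}{t}X,\frac{1}{t}Y)$ condition. Second, for the compatibility relations the paper matches coefficients of $t$ in the scalar identities \eqref{a}--\eqref{c}, whereas you decompose $E_{\alpha_t}$, $E_{\beta_t}$ in the frame $(E_\alpha,E_\beta)$ of $\ker d\alpha\cap\ker d\beta$ and solve a $2\times 2$ system; both routes work, and both genuinely need the Reeb hypothesis imported from Theorem \ref{thm1}, as you correctly flag -- indeed your closing consistency check shows that $A_0\equiv A_1\equiv 0$ alone yields only $\alpha_0(E_\alpha)+\beta_0(E_\beta)=0$ and $\alpha_0(E_\alpha)\beta_0(E_\beta)=\alpha_0(E_\beta)\beta_0(E_\alpha)$, not the four separate vanishings. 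Two small points of care: the hypothesis prescribes $E_{\alpha_t}=\frac{1}{t}X$ for a fixed but a priori unknown field $X$, not $\frac{1}{t}E_\alpha$, so your ``matching'' step should first use the $t$-independence of $tE_{\alpha_t}$ to force $X=E_\alpha$ (your linear system does exactly this once you require $tp$ and $tq$ constant in $t$); and your exactness claim for the summand $\alpha\wedge(d\alpha)^k\wedge\beta_0\wedge(d\beta)^\ell$ needs $\ell\geq 1$, since its primitive contains $(d\beta)^{\ell-1}$ -- the same implicit restriction as the paper's second Stokes computation, whose standing assumption is only $k\geq 1$.
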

 
\begin{proof}

 First, we fix an  orientation of $M$ by choosing a volume form $\Omega$  and then we consider the following smooth functions $A$, $B$ and $C$ given by:
\begin{eqnarray}\label{3.9}
{\left\{ \begin{array}{ccc}
A\Omega=\alpha\wedge\left(d\alpha\right)^k\wedge\beta\wedge\left(d\beta\right)^\ell; \\
B\Omega=\alpha_0\wedge\left(d\alpha\right)^k\wedge\beta\wedge\left(d\beta\right)^\ell+\alpha\wedge\left(d\alpha\right)^k\wedge\beta_0\wedge\left(d\beta\right)^\ell; \\
C\Omega=\alpha_0\wedge\left(d\alpha\right)^k\wedge\beta_0\wedge\left(d\beta\right)^\ell.
\end{array}\right.}
\end{eqnarray}
Consider the following quadratic function in the variable $t$:  
\begin{equation}
P(m, t)=t^2 A(m)+t B(m)+C (m), ~ ~  \forall ~m \in M, ~  \forall~ t.
\end{equation}
Equation (\ref{1}) becomes  \begin{equation}\label{poly} \alpha_t\wedge\left(d\alpha_t\right)^k\wedge\beta_t\wedge\left(d\beta_t\right)^\ell = t^{k+\ell} P(t) \Omega.\end{equation}
Assume $\left(\alpha_t, \beta_t\right)$  is  a contact pair of type $(k, \ell)$  for $t > 0 ,$  with  associated  Reeb vector fields $(E_{\alpha_t}, E_{\beta_t})=( { 1 \over t} X,  { 1 \over t} Y)$. We must show that that $\left(\alpha, \beta\right)$  is a  contact pair of type $(k, \ell)$ whose pair of  Reeb vector fields 
$(E_\alpha, E_\beta)=(X,Y)$ satisfies  $ \alpha_0(E_\alpha)=\beta_0(E_\beta)=0$  and $ \alpha_0(E_\beta)=\beta_0(E_\alpha)=0.$

\medskip 
Consider $t>0$ small enough. We can assume, without loss of generality, that $\alpha_t\wedge\left(d\alpha_t\right)^k\wedge\beta_t\wedge\left(d\beta_t\right)^\ell >0$  since the case
$\alpha_t\wedge\left(d\alpha_t\right)^k\wedge\beta_t\wedge\left(d\beta_t\right)^\ell <0$ can be proved by similar arguments.
\medskip

\noindent {\bf Step 1:}  First,  Lemma \ref{lemma5.4} says that $C$ is  identically zero and $$P(m, t)=t (t  A(m)+ B(m))$$ for  all $m \in M$.

\medskip 
\noindent {\bf Step 2:} We will prove that $B=0$ by using argument similar to those in Step 1. First, we remark that $B(m)\geq 0$ for any $m \in M$. Indeed if $B(m_0)<0$ for some $m_0$ then 
  $P(t, m)=t (t  A(m)+ B(m)) <0$  for all $t>0$ small enough and  points $m$ near $m_0$ because $P$ is smooth. So,  this would imply
 $$\alpha_t\wedge\left(d\alpha_t\right)^k\wedge\beta_t\wedge\left(d\beta_t\right)^\ell = t^{k+\ell} P(m, t)  \Omega <0$$  for $m$ near $m_0$ and for $t>0$ small enough. This is a contradiction. Thus $B \geq 0.$
 Now, apply Stokes' theorem  to get:
\begin{eqnarray*}
\int_M\alpha_0\wedge\left(d\alpha\right)^k\wedge\beta\wedge\left(d\beta\right)^\ell &=& - \int_{ M}d\left(\alpha_0\wedge \alpha\wedge \left(d\alpha\right)^{k-1}\wedge\beta\wedge\left(d\beta\right)^\ell\right)\\
&=&- \int_{ \partial M}\alpha_0\wedge \alpha\wedge \left(d\alpha\right)^{k-1}\wedge\beta\wedge\left(d\beta\right)^\ell\\
&=&0,
\end{eqnarray*}
since $M$ is closed. Similarly, we have
\begin{eqnarray*}
\int_M\alpha\wedge\left(d\alpha\right)^k\wedge\beta_0\wedge\left(d\beta\right)^\ell &=& (-1)^{2k+2} \int_{ M}d\left(\alpha\wedge   \left(d\alpha\right)^{k}\wedge\beta_0\wedge \beta \wedge \left(d\beta\right)^{\ell-1}\right)\\
&=&(-1)^{2k+2}  \int_{ \partial M}\alpha\wedge\left(d\alpha\right)^{\ell}\wedge\beta_0\wedge \beta\wedge \left(d\beta\right)^{\ell-1}\\
&=&0.
\end{eqnarray*}
Therefore, 
\begin{eqnarray*}
\int_M B\Omega=0.
\end{eqnarray*}
\noindent Consequently $B=0$ and  for $t>0$ small enough, we have $$P(m , t)=t^2A(m), ~\forall ~m \in M.$$ So,  we deduce:
\begin{eqnarray}
\alpha_t\wedge\left(d\alpha_t\right)^k\wedge\beta_t\wedge\left(d\beta_t\right)^\ell = t^{k+\ell+2} ~ \alpha\wedge\left(d\alpha\right)^k\wedge\beta\wedge\left(d\beta\right)^\ell >0.
\end{eqnarray}

\noindent Furthermore,  $(d\alpha_t)^{k+1}=0$ if and only if $(d\alpha)^{k+1}=0$ and $(d\beta_t)^{k+1}=0$ if and only if $(d\beta)^{k+1}=0$. Hence $\left(\alpha, \beta\right)$  is  a contact pair of type $(k, \ell).$\\
 To prove that its pair of Reeb vector fields  is $(E_\alpha, E_\beta)=(X,Y)$, we  use the following  relations that are satisfied by the pair $(E_{\alpha_t}, E_{\beta_t})$ of  Reeb vector fields of the contact pair $(\alpha_t, \beta_t)$:
\begin{equation} \label{a}   \alpha_t ( E_{\alpha_t})= \alpha_t \left( {1 \over t} X\right) ={1 \over t} \alpha_t(X)=1; \end{equation}
\begin{equation} \label{b}   \alpha_t ( E_{\beta_t})= \alpha_t \left( {1 \over t} Y\right) ={1 \over t} \alpha_t(Y)=0 ;\end{equation}
\begin{equation} \label{A}   \beta_t ( E_{\beta_t})= \beta_t \left( {1 \over t} Y\right) ={1 \over t} \beta_t(X)=1; \end{equation}
\begin{equation} \label{B}   \beta_t ( E_{\alpha_t})= \beta_t \left( {1 \over t} X\right) ={1 \over t} \beta_t(X)=0; \end{equation}
\begin{equation} \label{c}  i_{E_{\alpha_t}} d\alpha_t = i_{E_{\alpha_t}} d\beta_t=0 \quad{\rm and}\quad
i_{E_{\beta_t}} d\alpha_t = i_{E_{\beta_t}} d\beta_t=0.
 \end{equation}

\noindent From  Equation (\ref{a}) we get:
\begin{equation}\label{a'} \alpha_0(X) = 0\quad{\rm and}\quad  \alpha(X)=1. \end{equation} 

\noindent Equation (\ref{b}) implies
\begin{equation} \label{b'}  \alpha_0(Y) = 0\quad{\rm and}\quad  \alpha(Y)=0. \end{equation} 

\noindent Equations  (\ref{A}) and  (\ref{B}) imply
\begin{equation}\label{A'} \beta_0(Y) = 0; \quad  \beta(Y)=1, \qquad \beta_0(X) = 0; \quad{\rm and}\quad  \beta(X)=0 . \end{equation} 

\noindent While Equation (\ref{c})  gives
\begin{equation} \label{c'}    i_{X} d \alpha=   i_{X} d \beta=0 \quad{\rm and}\quad  i_{Y} d \alpha=   i_{Y} d \beta=0.  \end{equation} 

\noindent From (\ref{a'}),  (\ref{b'}),  (\ref{A'})  and (\ref{c'}) it follows that $X=E_{\alpha}$, $ \alpha_0(E_\alpha)=0$, and $\alpha_0(E_\beta)=0$ as well as $Y= E_{\beta}$, and $ \beta_0(E_\alpha)=  \beta_0(E_\beta)=0$. 

\noindent This completes the proof.

\end{proof}

 \section{Applications }
In this section, we give some applications of our main results.
\begin{prop}
Let $G_1$  (resp. $G_2$)  be  a  $K$-contact Lie group of dimension $2n+1$  (resp. $2m+1$) endowed  with  a foliation   $\mathcal{F}^1$  (resp.  $\mathcal{F}^2)$ of  codimension 1   defined by a harmonic  right invariant  1-form  on  $G_1$  (resp. $G_2$). Then the obvious pair of  product foliations induced by  $\mathcal{F}^1$ and  $\mathcal{F}^2$
on $G=G_1 \times G_2$ can be linearly deformed into a pair $\left(\mathcal{F}_t^1, \mathcal{F}_t^2\right)$  of transverse characteristic foliations in $G$.
\end{prop}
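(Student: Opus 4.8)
The plan is to reduce everything to the single-factor deformation result (Theorem \ref{Ameth}) on each $G_i$ and then assemble the two deformations into a product contact pair on $G = G_1 \times G_2$, so that Proposition \ref{prop1} applies. I would not invoke the full Theorem \ref{thm1}: since a product of Lie groups need not be compact, it is important to notice that the direction we actually use, Proposition \ref{prop1}, only requires $M$ to be even-dimensional, not closed.

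First I would apply Theorem \ref{Ameth} separately to $G_1$ and $G_2$. Let $\alpha_0$ (resp. $\beta_0$) be the harmonic right-invariant defining form of $\mathcal{F}^1$ (resp. $\mathcal{F}^2$). Theorem \ref{Ameth} yields a deformation of each foliation into contact structures; denote by $\alpha$ (resp. $\beta$) the fixed contact form producing the linear deformation $\alpha_t = \alpha_0 + t\alpha$ on $G_1$ (resp. $\beta_t = \beta_0 + t\beta$ on $G_2$), and let $E_\alpha$ (resp. $E_\beta$) be its Reeb vector field. By the theorem of Dathe and Rukimbira recalled in Section 2, the fact that $\alpha_t$ is contact for $t>0$ is equivalent to $\alpha$ being contact with $\alpha_0(E_\alpha)=0$, and likewise $\beta_0(E_\beta)=0$. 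Since $\dim G_1 = 2n+1$ and $\dim G_2 = 2m+1$, the form $\alpha$ has class $2n+1$ and $\beta$ has class $2m+1$, so here $k=n$ and $\ell=m$.

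Next I would pass to the product. Regarding $\alpha_0,\alpha$ as forms on $G$ pulled back from $G_1$ and $\beta_0,\beta$ as forms pulled back from $G_2$, Example \ref{Example C.P} shows that $(\alpha,\beta)$ is a product contact pair of type $(n,m)$ on the even-dimensional manifold $G$, whose Reeb fields are exactly $(E_\alpha, E_\beta)$, each viewed as a vector field on $G$ tangent to its own factor. It then remains to verify the four compatibility conditions of Proposition \ref{prop1}. Two of them, $\alpha_0(E_\alpha)=0$ and $\beta_0(E_\beta)=0$, are the single-factor conditions already produced above. The other two are automatic from the product structure: $\alpha_0$ is pulled back from $G_1$ and hence annihilates the $G_2$-tangent field $E_\beta$, giving $\alpha_0(E_\beta)=0$, and symmetrically $\beta(E_\alpha)=\beta_0(E_\alpha)=0$.

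With all hypotheses checked, Proposition \ref{prop1} yields that $(\alpha_t,\beta_t)=(\alpha_0+t\alpha,\ \beta_0+t\beta)$ is a contact pair of type $(n,m)$ on $G$ for every $t>0$, with Reeb fields $\bigl(\frac{1}{t}E_\alpha,\ \frac{1}{t}E_\beta\bigr)$. Its two characteristic foliations $(\mathcal{F}^1_t,\mathcal{F}^2_t)$ are therefore transverse and complementary, and at $t=0$ they recover the product foliation pair $(\mathcal{F}^1,\mathcal{F}^2)$; this is precisely the asserted linear deformation. The one delicate point is the observation that the possible noncompactness of $G$ forces the argument through Proposition \ref{prop1} rather than Theorem \ref{thm1}, together with the bookkeeping that the deforming contact forms $\alpha,\beta$ furnished by Theorem \ref{Ameth} satisfy the Reeb-annihilation conditions on $\alpha_0,\beta_0$; everything else follows formally from the product structure.
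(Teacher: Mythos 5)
Your proposal follows the same skeleton as the paper's proof: apply Theorem \ref{Ameth} on each factor, assemble the two right-invariant contact forms into a product contact pair on $G=G_1\times G_2$ via Example \ref{Example C.P}, and then invoke the linear-deformation result to get a contact pair $(\alpha_t,\beta_t)$ for $t>0$. The difference lies in the last step, and it is a genuine improvement over the published argument. The paper concludes by citing Theorem \ref{thm1}, whose statement requires the manifold to be \emph{closed}; since a product of Lie groups need not be compact, that citation is not justified as written. You correctly isolate the fact that only the sufficiency direction is needed, and that this direction --- Proposition \ref{prop1} --- is stated for an arbitrary even-dimensional manifold. You also explicitly verify the four compatibility conditions $\alpha_0(E_\alpha)=\alpha_0(E_\beta)=\beta_0(E_\alpha)=\beta_0(E_\beta)=0$ (two coming from the single-factor deformations, two for free from the product structure, since $\alpha_0$ is pulled back from $G_1$ and $E_\beta$ is tangent to the $G_2$-factor, and symmetrically), a point the paper's proof leaves entirely implicit even though Theorem \ref{thm1} cannot be applied without it.

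One inconsistency remains in your argument, however. To obtain $\alpha_0(E_\alpha)=0$ and $\beta_0(E_\beta)=0$ you invoke the implication (i) $\Rightarrow$ (ii) of the Dathe--Rukimbira theorem, but that theorem is likewise stated only for \emph{closed} manifolds, and that is the direction whose proof genuinely uses compactness (it is an integration argument of exactly the same kind as Proposition \ref{inverse}). So on a possibly non-compact $G_1$ this step reintroduces the very compactness problem you set out to avoid. The clean repair is to extract the Reeb-annihilation conditions from the construction behind Theorem \ref{Ameth} itself: there, harmonicity of the closed right-invariant form with respect to the contact metric is what yields $\alpha_0(Z)=0$ for the Reeb field $Z$ of the right-invariant contact form, and contactness of $\alpha_t$ for $t>0$ then follows from the pointwise, compactness-free direction (ii) $\Rightarrow$ (i). With that substitution your proof is fully self-consistent and strictly more careful than the paper's.
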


\begin{proof}
Let $\alpha_0^+$ (resp. $\beta_0^+$) be a closed  right invariant  1-form in $G_1$ (resp.$G_2$)  which is harmonic  with respect to a Riemannian  metric $h_1$ (resp.  $h_2$) defining the foliation  $\mathcal{F}^1$ (resp  $\mathcal{F}^2$).
By Theorem \ref{Ameth}, there exists a right invariant contact 1-form  $\alpha^+$ on $G_1$ such that  $(\alpha_t)^+=\alpha_0^++t\alpha^+$ is a  linear deformation. Similarly, there exists a right invariant contact $\beta^+$ on  $G_2$ such that $(\beta_t)^+=\beta_0^++t\beta^+$ is a linear deformation. Using Example \ref{Example C.P}, we see that $\left(\alpha^+, \beta^+\right)$ is a contact pair on  $G=G_1\times G_2$.\\
Theorem \ref{thm1} shows that  $\left((\alpha_t)^+, (\beta_t)^+\right)$ is a contact pair on  $G$. Thus,  it defines a pair of transverse and complementary characteristic foliations $\left(\mathcal{F}_t^1, \mathcal{F}_t^2\right)$  of $(\alpha_t,  \beta_t)$.
\end{proof}

 \begin{prop}
 Let $M$ and $N$ be two closed contact manifolds of dimensions $2h+1$ and $2k+1$ respectively. Then $M\times N\times\mathbb{T}^2$ admit a contact pair which is invariant under the action of $\mathbb{T}^2$, and such that  $M\times N\times\{p\}$ is a pair contact manifold  for all  $p\in\mathbb{T}^2$.
 \end{prop}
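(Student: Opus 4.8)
The plan is to prove the statement by combining Bourgeois's theorem (Theorem \ref{Theorem Bourgeois}) with the product contact pair construction of Example \ref{Example C.P}, after regrouping the three factors of $M\times N\times\mathbb{T}^2$ in the right way. The guiding observation is that $M\times N$ is even-dimensional and so carries no contact form, but one circle factor of $\mathbb{T}^2$ can be absorbed into $M$ (or $N$) to build an odd-dimensional manifold to which Bourgeois's construction applies: indeed $\mathbb{T}^2\times M$ has dimension $2(h+1)+1$.

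First I would write $M\times N\times\mathbb{T}^2\cong(\mathbb{T}^2\times M)\times N$ and fix a contact form $\lambda_N$ for the given contact structure on $N$. Assuming $\dim M=2h+1\geq 3$, Theorem \ref{Theorem Bourgeois} supplies a contact form $\eta$ on $\mathbb{T}^2\times M$ that is invariant under the natural $\mathbb{T}^2$-action and for which each slice $\{p\}\times M$ is a contact submanifold. If $M$ is a circle I would instead absorb a circle into $N$, provided $\dim N\geq 3$; the residual case $M=N=S^1$, giving $\mathbb{T}^4$, is covered by the classical contactness of the odd tori and would be treated separately.

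Next I would apply Example \ref{Example C.P} to the two contact manifolds $(\mathbb{T}^2\times M,\eta)$ and $(N,\lambda_N)$: pulling both forms back to the product, the pair $(\eta,\lambda_N)$ is a contact pair of type $(h+1,k)$ on $(\mathbb{T}^2\times M)\times N=M\times N\times\mathbb{T}^2$, the dimension count $2(h+1)+2k+2=2h+2k+4$ matching that of the total space. Invariance under $\mathbb{T}^2$ is immediate, since $\eta$ is $\mathbb{T}^2$-invariant while $\lambda_N$ is pulled back from $N$. For the slice property I would restrict $(\eta,\lambda_N)$ to $M\times N\times\{p\}$: there $\eta$ restricts to the contact form $\eta|_{\{p\}\times M}$ furnished by Bourgeois and $\lambda_N$ restricts to the contact form on $N$, so Example \ref{Example C.P} applies once more and the restriction is exactly the product contact pair on $M\times N$. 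This is the precise sense in which each $M\times N\times\{p\}$ ``is contact'': it is a contact-pair submanifold.

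The main obstacle is conceptual rather than computational: one must avoid the tempting but doomed route of deforming the product foliation pair on $M\times N\times\mathbb{T}^2$ defined by the closed forms $d\theta_1,d\theta_2$ via $(\,d\theta_1+t\lambda_M,\ d\theta_2+t\lambda_N\,)$. That deformation is not a contact pair, because $(d\lambda_M)^{h+1}=0$ pins the class of the first form at $2h+1$, so the relevant wedge has degree $2h+2k+2$ and never reaches the top degree $2h+2k+4$; equivalently $(\lambda_M,\lambda_N)$ is not a contact pair on the total space, and the hypotheses of Theorem \ref{thm1} fail. The correct move is to build the two missing degrees into an odd-dimensional Bourgeois block rather than to add them as independent closed factors. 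Beyond this, the only points requiring care are the dimension bookkeeping recorded above, the low-dimensional edge cases, and the verification that the restriction to a slice is genuinely a contact pair in the sense of Example \ref{Example C.P}.
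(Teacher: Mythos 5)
Your proposal is correct and takes essentially the same route as the paper: apply Bourgeois's theorem (Theorem \ref{Theorem Bourgeois}) to absorb the $\mathbb{T}^2$ factor into one of the two contact factors (the paper forms $N\times\mathbb{T}^2$, you form $\mathbb{T}^2\times M$ --- immaterial by symmetry), then invoke the product contact pair of Example \ref{Example C.P} on the resulting splitting, with $\mathbb{T}^2$-invariance and the slice property read off exactly as you describe. If anything, your write-up is more careful than the paper's, which does not record the dimension hypothesis needed to apply Bourgeois's theorem, does not address the low-dimensional edge cases you flag, and leaves the verification of the slice condition implicit.
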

 \begin{proof}
 Let $(M, \alpha)$ and $(N, \beta)$ be two closed contact manifolds. Using the theorem \ref{Theorem Bourgeois}, we see that $N\times\mathbb{T}^2$ admits a contact structure which is  invariant under the action of $\mathbb{T}^2$.
 Using the example \ref{Example C.P} we obtain that  $(\alpha, \beta)$ is a  contact pair on the product  $M\times N\times\mathbb{T}^2$. If  $M$ has the trivial action then $M\times N\times\mathbb{T}^2$ has the trivial diagonal action, we deduce that  $(\alpha, \beta)$ is invariant under the action of $\mathbb{T}^2$.
  \end{proof}
 

\noindent {\bf Conflict of interest statement:}  On behalf of all authors, the corresponding author states that there is no conflict of interest. \\

\noindent {\bf Data Availability Statement:} The authors declare that all data supporting the findings of this study are available within the article.

\end{document}